\let\eps\varepsilon
\newcommand{\R}{\mathbb R}
\newcommand{\bA}{\mathbf A}
\newcommand{\cA}{\mathcal A}
\newcommand{\hcA}{{\widehat{\mathcal A}}}
\newcommand{\bB}{\mathbf B}
\newcommand{\cB}{\mathcal B}
\newcommand{\bC}{\mathbf C}
\newcommand{\bG}{\mathbf G}
\newcommand{\bg}{\mathbf g}
\newcommand{\bi}{\mathbf i}
\newcommand{\bj}{\mathbf j}
\newcommand{\bn}{\mathbf n}
\newcommand{\be}{\mathbf e}
\newcommand{\bu}{\mathbf u}
\newcommand{\bv}{\mathbf v}
\newcommand{\bx}{\mathbf x}
\newcommand{\bz}{\mathbf z}
\newcommand{\cD}{\mathcal D}
\newcommand{\cS}{\mathcal S}
\newcommand{\balpha}{\boldsymbol{\alpha}}
\newcommand{\hbalpha}{\widehat{\boldsymbol{\alpha}}}
\newcommand{\bbeta}{\boldsymbol{\beta}}
\newcommand{\halpha}{\widehat{\alpha}}
\newcommand{\bhalpha}{\widehat{\boldsymbol{\alpha}}}
\newcommand{\bchi}{{\boldsymbol{\chi}}}
\newcommand{\bseta}{\boldsymbol{\eta}}
\newcommand{\bPhi}{\boldsymbol{\Phi}}
\newcommand{\bPsi}{\boldsymbol{\Psi}}
\newcommand{\tr}{\widetilde{r}}
\newcommand{\tbPhi}{\widetilde{\bPhi}}
\newcommand{\rA}{\mathrm A}
\newcommand{\rC}{\mathrm C}
\newcommand{\rM}{\mathrm M}
\newcommand{\rP}{\mathrm P}
\newcommand{\rS}{\mathrm S}
\newcommand{\rU}{\mathrm U}
\newcommand{\rV}{\mathrm V}
\newcommand{\rF}{\mathrm F}
\newtheorem{algorithm}{Algorithm}
\newtheorem{prop}{Proposition}
\begin{document}

\title{Model order reduction of parametric dynamical systems by slice sampling tensor completion
%\thanks{Grants or other notes
%about the article that should go on the front page should be
%placed here. General acknowledgments should be placed at the end of the article.}
}

\titlerunning{Model reduction of parametric systems by slice tensor completion}  % running head

\author{Alexander V. Mamonov  \and Maxim~A.~Olshanskii}

%\authorrunning{Short form of author list} % if too long for running head

\institute{A.V. Mamonov \at
Department of Mathematics, University of Houston, Houston, TX 77204\\
\email{avmamonov@uh.edu}
\and
M.A. Olshanskii \at
Department of Mathematics, University of Houston, Houston, TX 77204\\
\email{maolshanskiy@uh.edu}
}

\date{}
%\date{Received: date / Accepted: date}
% The correct dates will be entered by the editor

\maketitle

%=====================================================================
\begin{abstract}
Recent studies have demonstrated the great potential of reduced order modeling for parametric dynamical systems using low-rank tensor decompositions (LRTD). In particular, within the framework of interpolatory tensorial reduced order models (ROM), LRTD is computed for tensors composed of snapshots of the system's solutions, where each parameter corresponds to a distinct tensor mode. This approach requires full sampling of the parameter domain on a tensor product grid, which suffers from the curse of dimensionality, making it practical only for systems with a small number of parameters.  
To overcome this limitation, we propose a sparse sampling of the parameter domain, followed by a low-rank tensor completion. The resulting specialized tensor completion problem is formulated for a tensor of order $C + D$, where $C$ fully sampled modes correspond to the snapshot degrees of freedom, and $D$ partially sampled modes correspond to the system's parameters.  
To address this non-standard tensor completion problem, we introduce a low-rank tensor format called the hybrid tensor train. Completion in this format is then integrated into an interpolatory tensorial ROM. We demonstrate the effectiveness of both the completion method and the ROM on several examples of dynamical systems derived from finite element discretizations of parabolic partial differential equations with parameter-dependent coefficients or boundary conditions.

\keywords{Model order reduction \and 
Parametric dynamical systems \and Low-rank tensor completion \and Tensor decomposition}
\subclass{65P99 \and 15A69}
%\PACS{PACS code1 \and PACS code2 \and more}
\end{abstract}

%=====================================================================
\section{Introduction}

In this paper, we focus on reduced order modeling for a multiparameter dynamical system with a sparse 
sampling of the parameter domain. 
{A particular setup that we study here is given by a dynamical system}
\begin{equation}
\label{eqn:GenericPDE}
\bu_t = F(t, \bu, \balpha),  \quad t \in (0,T), \quad \text{and}~ \bu|_{t=0} = \bu_0,
\end{equation}
where a known continuous flow field  $F:(0,T)\times \mathbb{R}^{M_1} \times\cA\to \mathbb{R}^{M_1}$
depends on the vector of parameters $\balpha= (\alpha_1,\dots,\alpha_D)^T$   from a parameter domain 
$\cA \subset \mathbb{R}^D$. 
A possible example of \eqref{eqn:GenericPDE} is a system of ODEs for nodal values of the finite 
volume or finite element solution to a parabolic PDE problem, where material coefficients, body forces, boundary conditions, the computational domain (via a mapping into a reference domain), etc., are parameterized by $\balpha$.

After discretization in time, each trajectory $\bu = \bu(t, \balpha) : [0,T) \to \mathbb{R}^{M_1}$ is represented by the collection of snapshots $\bu(t_k,\balpha)\in\mathbb{R}^{M_1}$ at times $0\le t_1 < \dots < t_{M_2}\le T$. The snapshots can be organized in a matrix 
\begin{equation}
\Phi(\balpha)=\big[\bu(t_1,\balpha),\dots, \bu(t_{M_2},\balpha)\big]\in \R^{M_1 \times M_2}.
\label{eqn:phialpha}
\end{equation}
Assuming that the parameter domain $\cA$ is the $D$-dimensional box 
\begin{equation}
\cA = {\textstyle \bigotimes\limits_{i=1}^D} [\alpha_i^{\min}, \alpha_i^{\max}],
\label{eqn:box}
\end{equation}  
we introduce a Cartesian grid on $\cA$ by distributing $K_i$ nodes $\{\halpha_i^j\}_{j=1,\dots,K_i}$  
within each of the intervals $[\alpha_i^{\min}, \alpha_i^{\max}]$  for $i=1,\dots,D$. 
The nodes of the grid form the set
\begin{equation}
\label{eqn:grid}
	\hcA = \left\{ \bhalpha =(\halpha_1,\dots,\halpha_D)^T:\,
	\halpha_i \in \{\halpha_i^k\}_{k=1}^{K_i}, ~ i = 1,\dots,D \right\},\quad  K= \prod_{i=1}^{D} K_i.
\end{equation} 

The \emph{discrete} parametric solution manifold is defined by the set of trajectories for all parameters 
$\bhalpha \in \hcA$, where each trajectory is given by the snapshot matrix $\Phi( \bhalpha)$. 
{Alternatively, this manifold can be represented} by the \emph{multi-dimensional} array
\begin{equation}
(\bPhi)_{:,:,k_1,\dots,k_D} = \Phi( \bhalpha),\quad 
\bhalpha=\big(\halpha_1^{k_1},\dots,\halpha_D^{k_D}\big)^T, 
\label{eqn:snapmulti}
\end{equation}
which is a tensor of order $2+D$ and size $M_1 \times M_2 \times K_1\times\dots\times K_D$. 
Observe that the first and second indices of the {snapshot tensor} $\bPhi$ are reserved for 
modes corresponding to the spatial and temporal degrees of freedom, respectively. Depending on any additional tensor structure exhibited by the state variable $\bu$, more than two modes of $\bPhi$ may be allocated for space-time indexing. In general, we assume $C \geq 1$ space-time modes in $\bPhi \in \mathbb{R}^{M_1 \times \dots \times M_C \times K_1 \times \dots \times K_D}$.

It is clear that for a high dimension $D$ or a fine parameter grid $\hcA$, working directly with $\bPhi$ 
is prohibitively expensive, if possible at all. Therefore, one approach to model order reduction for 
a system like \eqref{eqn:GenericPDE} is based on the assumption that $\bPhi$ can be effectively 
{approximated by a \emph{low-rank} tensor $\widetilde{\bPhi}$ in one of the 
commonly used formats. Depending on the definition of tensor rank, various tensor decomposition formats can be employed, including CANDECOMP/PARAFAC (CP), Tucker (HOSVD), or Tensor Train (TT).} 
While known rigorous analyses revealing how tensor ranks depend on the properties 
of a parameterized differential equation and the targeted accuracy of the solution manifold recovery, 
or its smoothness, are very limited (see, e.g., \cite{bachmayr2017kolmogorov} and discussions 
in \cite{khoromskij2011tensor,nouy2017low,bachmayr2023low}), numerical evidence suggests that 
for many practical parametric PDEs, solutions to such problems or statistics derived from them are well 
approximated in low-rank formats \cite{kressner2011low,ballani2015hierarchical,dolgov2015polynomial,ballani2016reduced,eigel2017adaptive,bachmayr2018parametric,dolgov2019hybrid,glau2020low,mamonov2022interpolatory,mamonov2024tensorial}.
%\cite{schwab2011sparse}

Since entry-wise assembly of the snapshot tensor $\bPhi$ may not be feasible for large $D$,
a more practical approach for finding $\widetilde{\bPhi}$ is by solving a tensor completion problem.
A general low-rank tensor completion problem can be formulated as finding a minimal rank tensor 
{$\tbPhi\in\R^{M_1 \times \dots \times M_C \times K_1 \times \dots \times K_D}$ 
that fits the tensor 
$\bPhi\in\R^{M_1 \times \dots \times M_C \times K_1 \times \dots \times K_D}$ 
for a subset of its observed entries: 
\begin{equation}
\label{CProblem}
\tbPhi = \underset{\bPsi\in\R^{M_1 \times \dots \times M_C \times K_1 \times \dots \times K_D}}{\operatorname{argmin}}\mbox{rank}(\bPsi),
\quad \text{s.t.} \quad \bPsi|_\Omega=\bPhi|_\Omega,
\end{equation}}
where $\bPsi|_\Omega$ is a restriction of the tensor on the set of indices $\Omega$ of observed entries. 
The exact fitting in \eqref{CProblem} can be relaxed to the approximate one, yielding the inexact completion 
problem
\begin{equation}
\label{CProblem2}
\tbPhi = \underset{\bPsi\in\R^{M_1 \times \dots \times M_C \times K_1 \times \dots \times K_D}}{\operatorname{argmin}}\mbox{rank}(\bPsi),
\quad \text{s.t.}~
\|\bPsi|_\Omega-\bPhi|_\Omega\|\le \varepsilon,
\end{equation}
with a prescribed $\varepsilon \ge 0$. The inexact completion \eqref{CProblem2} is a commonly used 
problem setup for the case of noisy data~\cite{candes2010matrix} and this is the formulation we are 
interested in here. 
For CP, Tucker, and TT tensor ranks, the completion problems \eqref{CProblem} and \eqref{CProblem2} 
are NP-hard. {Building on the success of solving low-rank matrix completion problems
\cite{candes2010power,cai2010singular,candes2012exact,vandereycken2013low,jain2013low,nguyen2019low},
a popular approach is to relax \eqref{CProblem}--\eqref{CProblem2} into convex optimization problems
\cite{signoretto2010nuclear,gandy2011tensor,bengua2017efficient}.}
Other approaches to  {approximate} tensor completion include ALS methods
\cite{yokota2016smooth,grasedyck2019stable}, Riemannian optimization
\cite{kressner2014low,steinlechner2016riemannian}, Bayesian methods
\cite{zhao2015bayesian,zhao2015bayesian2}, and projection methods~\cite{rauhut2015tensor}, 
with applications ranging from video recovery to seismic data reconstruction
\cite{bengua2017efficient,liu2022efficient}.

%there has been significant progress in developing numerical algorithms for completing multi-dimensional 
%arrays (tensors) in various low-rank tensor formats; see, e.g.,
%~\cite{zhou2017tensor,long2019low,cai2019nonconvex}.

While the completion problem formulations \eqref{CProblem}--\eqref{CProblem2} accommodate a wide range of applications, model order reduction for systems like \eqref{eqn:GenericPDE} imposes additional structure on the set $\Omega$. In particular, we focus on the case where $\Omega$ corresponds to the so-called {slice sampling}.  
As indicated by the notation above, in slice sampling, the set of all $C + D$ tensor modes is divided into two groups. The first $C$ modes, $M_1 \times \dots \times M_C$, are fully sampled, while the remaining $D$ modes, $K_1 \times \dots \times K_D$, are sampled sparsely. For a fixed set of $D$ indices $k_1, \dots, k_D$ corresponding to sparsely sampled modes, we define the tensor  
\begin{equation}
(\bPhi)_{:,\ldots,:,k_1,\dots,k_D} \in \mathbb{R}^{M_1 \times \dots \times M_C}
\label{eqn:phislice}
\end{equation}  
as a \textit{slice}, hence the name \textit{slice sampling}. For the problem \eqref{eqn:GenericPDE}, we have $C = 2$,  {and in general we assume that $C$ is small so that using a Cartesian grid in physical variables is feasible.} The slice \eqref{eqn:phislice} coincides with the snapshot matrix \eqref{eqn:phialpha}, where $\balpha = \big(\halpha_1^{k_1}, \dots, \halpha_D^{k_D} \big)^T$. The sampled indices $k_1, \dots, k_D$ form a sparse subset of the grid $\hcA$.  

To exploit slice sampling efficiently and improve the computational cost of solving the completion problem \eqref{CProblem2}, we introduce a custom low-rank tensor format called the \textit{hybrid tensor train} (HTT). Completion in the HTT format involves projecting the slice-sampled tensor onto reduced orthogonal bases along the first $C$ modes (similar to HOSVD), followed by multiple completions of smaller tensors in TT format. These smaller completions can be performed in parallel using existing tensor completion algorithms.  
The HTT format is particularly well-suited for constructing a Galerkin reduced-order model (ROM) known in the literature as the {tensorial ROM (TROM)}~\cite{mamonov2022interpolatory,mamonov2023analysis,mamonov2024tensorial}, a natural extension of the POD ROM for parametric dynamical systems.

The application of tensor completion methods to reduced order modeling of parametric ODEs or PDEs 
is rare, and this study aims to explore this direction. In general, the use of tensor methods for solving 
parametric PDEs is not new. Several studies have developed sparse tensorized solvers for certain 
high-dimensional and stochastic PDEs
~\cite{schwab2011sparse,khoromskij2011tensor,dolgov2015polynomial,garreis2017constrained,nouy2017low,dolgov2018direct,dolgov2019hybrid}.

The reconstruction of scalar output quantities of parametric solutions in tensor format from incomplete 
observations was addressed in~\cite{ballani2015hierarchical,glau2020low}. In~\cite{ballani2015hierarchical}
the authors employed a tensor cross approximation, while~\cite{glau2020low} applied TT-completion via 
Riemannian optimization to recover an option pricing statistic from solutions of parametrized Heston and 
multi-dimensional Black-Scholes models. Additionally, a comparison of TT-cross interpolation and 
TT-completion for a parameterized diffusion equation in~\cite{steinlechner2016riemannian} demonstrated 
that TT-completion requires fewer PDE solver executions to find a low-rank approximation of a particular 
solution functional.

While the works~\cite{ballani2015hierarchical,steinlechner2016riemannian,glau2020low} focused on 
recovering scalar solution statistics in tensor format, here we aim to approximate a tensor of solution 
snapshots \label{eqn:snapmulti} for subsequent use in building TROM. Similarly to the previous studies
~\cite{mamonov2022interpolatory,mamonov2023analysis,mamonov2024tensorial}, 
we seek a low-rank approximation of a tensor. {However, here we compute such an 
approximation from a sparse sampling of the tensor.} Due to the relatively high separation ranks and 
large sizes of the space and time modes in the snapshot tensor, applying existing completion 
algorithms in standard low-rank formats is computationally prohibitive. 
{This motivates the introduction of tensor completion in the HTT format customized 
specifically for slice-sampled tensors arising in TROM construction.}

The remainder of the paper is organized as follows: Section~\ref{sec2} provides a more detailed 
explanation of slice sampling and introduces the proposed completion method. We deviate from the traditional 
approach of defining the tensor rank before formulating the completion problem, as we find it more 
instructive to first explain the method of obtaining the fitting tensor. The resulting rank-revealing format 
becomes more intuitive afterward. Section~\ref{sec3} describes the Galerkin ROM for the dynamical 
system. This ROM utilizes HTT as the dimension reduction technique (in place of the standard POD), 
and we refer to it as HTT-ROM. Section~\ref{sec4} presents the results of numerical experiments.

%=====================================================================
\section{Slice sampling tensor completion} 
\label{sec2}

We consider here a problem that we refer to hereafter as slice sampling low-rank tensor completion. 
Consider a tensor $\bPhi$ of order $C + D$ of size 
$M_1 \times \ldots \times M_{C} \times K_1 \times \ldots \times K_{D}$, and two sets of indices
\begin{equation}
\Omega_{C} = \bigotimes\limits_{i=1}^{C} \{ 1, \ldots, M_i \}, \quad 
\Omega_D = \bigotimes\limits_{j=1}^{D} \{ 1, \ldots, K_j \},
\label{eqn:omegafp}
\end{equation}
where the products are understood in Cartesian sense. The set $\Omega_{C}$ is the set of \emph{sliced} 
indices and  $\Omega_{D}$ is the set of \emph{sampled} indices.
The sampling (or training) set is the subset 
\begin{equation}
\widetilde\Omega_D \subset \Omega_D. 
\label{eqn:omegap}
\end{equation}

In what follows we employ multi-indices
\begin{equation}
\bi = (i_1,\ldots,i_{C}), \quad
\bj = (j_1,\ldots,j_{D}),
\end{equation}
for the first $C$ and last $D$ indices of $\bPhi$, respectively.
Then, given the data
\begin{equation}
\cD = \left\{ 
\Phi_{\bi, \bj} = 
\Phi_{i_1,\ldots,i_{C},j_1,\ldots,j_{D} } \;
\left| \; \forall\, \bi \in \Omega_{C}, \;
\forall\, \bj \in \widetilde{\Omega}_D
\right.\right\},
\label{eqn:datafp}
\end{equation}
we seek %a slice sampling 
a completing tensor $\widetilde{\bPhi}$ solving \eqref{CProblem} or \eqref{CProblem2} with $\Omega=\Omega_{C}\otimes\widetilde{\Omega}_D$.
%of the same order and size as $\bPhi$ so that 
%\begin{equation}\label{eq:eps}
%\big\| \bPhi - \widetilde{\bPhi} \big\|_{F} < \eps\big\| \bPhi\big\|_{F}
%\end{equation}
%for some $\eps\ge0$. 
%To constrain the problem away from the trivial solution $\widetilde{\bPhi} = \bPhi$ which is generally impossible to determine from just the knowledge of $\cD$, we impose the condition that $\widetilde{\bPhi}$ must be low-rank in a particular sense that will be defined later.
\smallskip

Examining the definition of the index sets \eqref{eqn:omegafp}--\eqref{eqn:omegap}, we observe that the term \emph{slice sampling} refers to the setup in which the first $C$ modes of $ \bPhi $, the space--time modes of the snapshot tensor, are fully sampled, while the last $D$ modes are only partially sampled. In other words, we learn the tensor $ \bPhi $ through $C$-dimensional slices. This distinguishes the slice sampling completion problem considered here from conventional low-rank tensor completion settings, in which none of the tensor modes are fully sampled.

Given that the two index sets \eqref{eqn:omegafp} of entries of $ \bPhi $ are sampled differently, slice sampling tensor completion is a two-stage process. First, orthonormal bases for the low-dimensional subspaces of the fully sampled modes are computed. Second, TT-completion is performed component-wise in the subspaces computed in the first stage. Finally, the completed tensor $ \widetilde{\bPhi} $ can be assembled, or one may use the reduced bases and component-wise TT-completions from the first two stages to operate on $ \widetilde{\bPhi} $ without assembling it explicitly.

\subsection{Reduced basis for fully sampled space--time modes}

As mentioned above, the first stage of slice sampling completion is the computation of the 
orthonormal bases for the reduced  subspaces corresponding to space--time modes of $\bPhi$.
Since the first $C$ modes are fully sampled, one may employ a simple technique based on truncated SVD to compute the bases of interest. 

First, for all multi-indices in the set $\widetilde{\Omega}_D$ introduce a linear enumeration so that it can be written as
\begin{equation}
\widetilde{\Omega}_D = \left\{ \left. \bj^{(k)} \in \Omega_D \; \right| \; k = 1,\ldots,P \right\},\quad P=|\widetilde{\Omega}_D|.
\label{eq:OmegaP}
\end{equation}
Consider a tensor $\bPhi^P$ of order $C + 1$ of size $M_1 \times \ldots \times M_{C} \times P$
with entries 
\begin{equation}\label{eq:PhiP}
\Phi^P_{\bi,k} = \Phi_{\bi,\bj^{(k)}}, \quad
k = 1,\ldots,P.
\end{equation}
Then, assemble $C$ matrices $\rF^{(i)} \in \mathbb{R}^{M_i \times {M}_i^\prime P}$, $i=1,\ldots,C$, 
with $M_i^\prime = \prod\limits_{{q = 1}\atop{q \neq i}}^{C} M_q$, defined as
\begin{equation}\label{eq:Fi}
\rF^{(i)} = \mbox{unfold}_i \Big( \bPhi^P \Big),
\end{equation}
where $\mbox{unfold}_i$ denotes the $i^{\text{th}}$-mode unfolding of a tensor. 
Next, compute the singular value decompositions
\begin{equation}\label{eq:SVD1}
\rF^{(i)} = \rU^{(i)} \Sigma^{(i)} (\rV^{(i)})^T, \quad i = 1,\ldots,C,
\end{equation}
where each $\Sigma^{(i)}$ contains the singular values $\sigma^{(i)}_1 \geq \ldots \geq \sigma^{(i)}_{M_i} $.
Choose a threshold $\eps_{C}\ge0$ and determine the ranks
\begin{equation}\label{eq:qi}
q_i = \min \Big\{ q ~\Big|~ \sum_{j>q}\big(\sigma^{(i)}_j\big)^2  \leq \eps_{C} \big\|\rF^{(i)}\big\|_{F}^2 \Big\}, \quad i = 1,\ldots,C.
\end{equation}
Using Matlab notation, we denote the matrix containing the first $q_i$ columns of $\rU^{(i)}$ by
\begin{equation}
\widetilde{\rU}^{(i)} = \rU^{(i)}_{:, 1:q_i} 
\in \mathbb{R}^{M_i \times q_i}, 
\quad i = 1,\ldots,C.
\label{eqn:tui}
\end{equation}
The columns of matrices $\widetilde{\rU}^{(i)}$ comprise the orthonormal bases that we refer to as the reduced bases for all fully sampled modes, $i = 1,\ldots,C$.

%=====================================================================
\subsection{Tensor-train completion for partially sampled modes} 
\label{s:compl2}

Once the reduced bases for the fully sampled space--time modes are computed, there exist two options for completion along the partially sampled ones. In order to describe both we recall the definition of $n$-mode tensor-matrix product. Given a tensor $\bPsi$ of order $d$ of size 
$m_1 \times \ldots \times m_d$ and a matrix $\rM \in \mathbb{R}^{m \times m_n}$, the $n$-mode product $\bPsi \times_n \rM$
is a tensor of order $d$ of size 
$m_1 \times m_{n-1} \times m \times m_{n+1} \times m_d$ with entries given by
\begin{equation}
[\bPsi \times_n \rM]_{i_1,\ldots,i_{n-1},j,i_{n+1},\ldots,i_d} = \sum_{k = 1}^{m_n}
\bPsi_{i_1,\ldots,i_{n-1},k,i_{n+1},\ldots,i_d} 
\rM_{jk}, \quad j = 1,\ldots,m.
\end{equation}

Consider the projected tensor 
\begin{equation}
\bPhi^q = \bPhi \times_1 
\left( \widetilde{\rU}^{(1)} \right)^T \times_2
\left( \widetilde{\rU}^{(2)} \right)^T \times_3 
\ldots \times_{C} 
\left( \widetilde{\rU}^{(C)} \right)^T
\end{equation}
of order $C + D$ and of size 
$q_1 \times \ldots \times q_{C} \times K_1 \times \ldots \times K_{D}$. With the reduced bases of fully sampled modes at hand, one may compute from the original data $\cD$ the projected data
\begin{equation}
\cD^q = \left\{ \bPhi^q_{\bi,\bj} \;
\left| \; \forall \bi \in \Omega_{C}^q, \;
\forall \bj \in \widetilde{\Omega}_D
\right.\right\},
\label{eqn:dataq}
\end{equation}
where
\begin{equation}
\Omega_{C}^q = \bigotimes\limits_{i=1}^{C} \{ 1, \ldots, q_i \}.
\label{eqn:omegafr}
\end{equation}
Then, the key step in finding $\widetilde{\bPhi}$ from $\cD$ is to determine $\widetilde{\bPhi}^q$, a low-rank completion of $\bPhi^q$ from the projected data $\cD^q$. 
Once $\widetilde{\bPhi}^q$ is found, the completion $\widetilde\bPhi$  of $\bPhi$ is given by
\begin{equation}\label{eq:tPhi}
\widetilde\bPhi = \widetilde{\bPhi}^q \times_1 
\widetilde{\rU}^{(1)} \times_2
\widetilde{\rU}^{(2)}  \times_3 
\ldots \times_{C} 
\widetilde{\rU}^{(C)}.
\end{equation}
One can easily check\footnote{To verify the result in \eqref{eq:estEps}, we note that $\|\bPhi|_\Omega-\tbPhi|_\Omega\|_F=
\|\bPhi^P-\tbPhi^P\|_F$, $\|\bPhi|_\Omega\|_F=\|\bPhi^P\|_F$ and apply \cite[Property 10]{de2000multilinear} after observing that $\tbPhi^P$ is a truncated HOSVD of $\bPhi^P$.} 
that $\tbPhi$ satisfies 
\begin{equation}\label{eq:estEps}
\big\|\bPhi|_\Omega-\tbPhi|_\Omega\big\|_F\le \sqrt{C}\eps_C\big\|\bPhi|_\Omega\big\|_F,
\end{equation}
with $\eps_C$ from \eqref{eq:qi}. For $\eps_C>0$ this renders our completion inexact as in \eqref{CProblem2}.  

We still need to address the completion problem for a sliced-sampled tensor, but with reduced dimensions of the slices.  
As mentioned previously, there are two ways of solving this problem that we consider below.

The first option is to find the whole tensor $\widetilde{\bPhi}^q$ in a common low-rank tensor format such as TT. Further in the paper we work with with low-rank completion in TT format, but it is certainly possible to use other LRTDs, e.g., in CP or Tucker formats, as well. 
Finding $\widetilde{\bPhi}^q$  can be achieved using an existing method for low-rank tensor completion, e.g., stable ALS method from \cite{grasedyck2019stable}. 
The main disadvantage of such approach applied to, e.g., model order reduction for parametric dynamical systems, is its high cost both in terms of memory and computational time. To resolve it, we suggest a different approach introduced as the second option below.

The second option is to perform a low-rank TT completion component-wise in the following sense. Let
$Q = \left| \Omega_{C}^q \right| = \prod_{i=1}^{C} q_i$. Consider $Q$
data sets
\begin{equation}
\cD^q_{\bi} = \left\{ \Phi^q_{\bi,\bj} \;
\left| \; \forall \bj \in \widetilde{\Omega}_D
\right.\right\}, \quad 
\bi \in \Omega_{C}^q.
\label{eqn:dataqi}
\end{equation}
Then, we perform $Q$ low-rank TT completions with data 
$\cD^q_{\bi}$ for each $\bi \in \Omega_{C}^q$ to obtain TT-tensors 
\begin{equation}
\widetilde{\bPhi}^q_{\bi} = 
\sum_{j_0=1}^{r_0^{\bi}} \cdots \sum_{j_{D}=1}^{r_{D}^{\bi}}  \bg^{\bi}_{1,j_0,j_1} \circ \cdots \circ  \bg^{\bi}_{D,j_{D-1},j_D},\quad \bi \in \Omega_{C}^q,
%G^{(1)}_{:,i_1,:}\dots G^{(D)}_{:,i_{C},:}
\label{eqn:bphiqitt}
\end{equation}
where $\bg^{\bi}_{k,j_{k-1},j_k} \in \R^{N_k}$ for $k=1,\ldots,D$, and for all values of $j_0,\ldots,j_{D}$ that appear in \eqref{eqn:bphiqitt}. The summation indices  $r_k^{\bi}$ are the TT compression ranks, where we follow the convention $r_0^{\bi} = r_D^{\bi} = 1$ for notation convenience. 

After computing the completions \eqref{eqn:bphiqitt},
the entries of the tensor $\widetilde{\bPhi}^q$ are simply
\begin{equation}
\widetilde{\bPhi}^q_{\bi, \bj } = \left[\widetilde{\bPhi}^q_{\bi}\right]_{\bj}, 
\quad \bi \in \Omega_{C}^q, 
\quad \bj \in \Omega_D.
\end{equation}
Then, the completion $\widetilde\bPhi$ of $\bPhi$ can be computed via \eqref{eq:tPhi}.

Summarizing, the HTT format we introduced to complete a sliced-sampled tensor is given by 
$C$ matrices and $Q$ tensors in TT format:
\begin{equation}\label{eq:HTT}
\rm{HTT}\big( \widetilde\bPhi \big) = \left\{\widetilde{\rU}^{(i)} \in \mathbb{R}^{M_i \times q_i}, \quad i = 1,\ldots,C, 
\quad \widetilde{\bPhi}^q_{\bi}\in \R^{K_1\times\dots\times K_D},~\bi\in \Omega_{C}^q
\right\}.
\end{equation}
We call $\{q_i\}_{i=1,\dots,C}$ the \emph{C-ranks} and $\{r^{\bi}_i\}_{i=0,\dots,D}$ the \emph{D-ranks} of $\widetilde\bPhi$.

Assuming for simplicity that $M_i=M$, $K_j=K$, all C-ranks are equal to $q$ and all D-ranks are equal to $r$, the representation complexity of $\bPhi$ is $CMq + (q^C)D(r^2)K$. We observe that the complexity grows exponentially in $C$. However,
in applications we are interested in, $C$ is typically small. 
In Section~\ref{sec3}, we provide the details of the case when the slice sampling completion is applied to build a ROM for a general parametric dynamical system with $C=2$. 

%=====================================================================
\subsection{Algorithm for slice sampling tensor completion} 
\label{s:compl3}

We summarize the discussion of slice sampling tensor completion in HTT format in the algorithm below.

\begin{algorithm}[Slice sampling tensor completion in HTT format]
%\caption{Slice sampling tensor completion in HTT format}
\label{Alg1}
\label{alg:stcomptt}
~\\[1ex]
\textbf{Input:} Index set $\widetilde{\Omega}_D$ as in \eqref{eqn:omegap} and the corresponding data $\cD$ as in \eqref{eqn:datafp}, threshold $\eps_{C}\ge0$.
\begin{enumerate}
\item Form the tensor $\bPhi^P$ from the data $\cD$ as in \eqref{eq:PhiP}. 
\item For $i = 1,\ldots,C$:
\begin{itemize}
\item[(a)] Form the unfolding $\rF^{(i)} = \mbox{unfold}_i \big( \bPhi^P \big)$;
\item[(b)] Compute the SVD of the unfolding matrix $\rF^{(i)} = \rU^{(i)} \Sigma^{(i)} (\rV^{(i)})^T$;
\item[(c)] Choose the rank $q_i$ using \eqref{eq:qi} and form the matrix $\widetilde{\rU}^{(i)} = \rU^{(i)}_{:, 1:q_i}$, the columns of which form the reduced basis. 
\end{itemize}

\item For each multi-index $\bi \in \Omega^q_{C}$:
\begin{itemize}
\item[(a)] Compute the projected data 
\begin{equation}
\cD^q_{\bi} = \left\{ 
\Phi_{\bi,\bj^{(k)}} \times_1 
\big( \widetilde{\rU}^{(1)} \big)^T \times_2
\big( \widetilde{\rU}^{(2)} \big)^T \times_3 
\ldots \times_{C} 
\big( \widetilde{\rU}^{(C)} \big)^T
\right\}_{k=1}^{P},
\end{equation}
where the multi-indices $\bj^{(k)}$ are as in \eqref{eq:OmegaP};
\item[(b)] Perform low-rank completion in TT format \eqref{eqn:bphiqitt} with data $\cD^q_{\bi}$ to obtain TT tensor $\widetilde{\bPhi}^q_{\bi}$ with compression ranks $r^{\bi}_k$, $k=0,1,\ldots,D$.
\end{itemize}
\end{enumerate}
\noindent \textbf{Output:} 
$\rm{HTT} \big( \tbPhi \big)$ as in \eqref{eq:HTT}.
%orthogonal matrices $\widetilde{\rU}^{(i)} \in \mathbb{R}^{M_i \times q_i}$, $i = 1,\ldots,C$, and TT-tensors 
%$\widetilde{\bPhi}^q_{\bi}\in \R^{K_1\times\dots\times K_D}$, $\bi\in \Omega_{C}^q$.
\end{algorithm}

%=====================================================================
\subsection{An estimate for the $\{C,D\}$-ranks} 

Let us prove one  estimate of the $C$- and $D$-ranks of a tensor through the  ranks of its unfoldings. To this end, denote by $\mbox{matr}_k(\bA)\in\R^{(K_1\dots K_k)\times(K_{k+1}\dots K_N)}$ the $k$-index matricization of a tensor $\bA\in \R^{K_1\times\dots\times K_N}$ (see, e.g., \cite{hackbusch2012tensor} for a definition) 
%such that  \textcolor{red}{[FIX INDEXING]}
%\begin{equation}
%\big(\mbox{matr}_k(\bPhi)\big)_{i_1\dots i_k,\,i_{k+1}\dots i_{C+D}}=\bPhi_{i_1, \dots ,i_{C+D}},
%\end{equation}
and let 
\begin{equation}
\widehat q_i=\mbox{rank}(\mbox{unfold}_i(\bPhi)), \quad\text{and}\quad \widehat r_k=\mbox{rank}(\mbox{matr}_k(\bPhi)),
\end{equation}
for $i=1,\dots,\text{\small $C+D$}$, $k=1,\dots,\text{\small $C+D$}-1$, and $\widehat r_0=\widehat r_{C+D}=1$. 
In other words,  $\{\widehat q_1,\dots,\widehat q_{C+D}\}$ and $\{\widehat r_0, \dots,\widehat r_{C+D}\}$ are the HOSVD and TT-SVD ranks of $\bPhi$, respectively.

For the purpose of analysis, it is convenient to think about TT decomposition as a tensor chain. For example, for the decomposition \eqref{eqn:bphiqitt} we 
define $r_{k-1}^{\bi}\times N_k\times r_k^{\bi} $ tensors $\bG_{\bi}^{(k)}$ by $\bG_{\bi}^{(k)}=\bg^{\bi}_{k,:,:}$ and consider 
the representation of $\widetilde{\bPhi}^q_{\bi}$ via the chain of tensors,
\begin{equation}
\widetilde{\bPhi}^q_{\bi} \sim \left\{ \bG_{\bi}^{(1)},\dots, \bG_{\bi}^{(D)}\right\}.
\end{equation}

\begin{prop}
\label{prop:httranks}
 Tensor $\bPhi$ can be represented in the HTT format with $C$-ranks $\{q_1,\dots,q_{C}\}$  and $D$-ranks
    $\{r_0^{(\bi)},\dots,r_{D}^{(\bi)}\}$, ${\bi\in \Omega_{C}^q}$, 
    so that it holds
    \begin{equation}
    q_i = \widehat q_i,\quad \text{and} \quad
     r_0^{(\bi)}=1,~ r_j^{(\bi)}= \widehat r_{j+C},
    \end{equation}
    {for} $i=1,\dots,\text{\small $C$}$, $j=1,\dots,\text{\small $D$}$, and for all $\bi \in \Omega_{C}^q$. 
\end{prop}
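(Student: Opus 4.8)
The plan is to construct an explicit HTT representation of $\bPhi$ with the claimed ranks, starting from the (exact) HOSVD and TT-SVD of $\bPhi$. First I would take the thin HOSVD of $\bPhi$ restricted to its first $C$ modes: for $i=1,\dots,C$ let $\widehat{\rU}^{(i)}\in\mathbb R^{M_i\times\widehat q_i}$ contain an orthonormal basis of the column space of $\mbox{unfold}_i(\bPhi)$, so that $\mbox{rank}(\mbox{unfold}_i(\bPhi))=\widehat q_i$. Because these are exact column spaces (no truncation), projecting and lifting back is lossless, i.e.
\[
\bPhi = \bPhi^q \times_1 \widehat{\rU}^{(1)} \times_2 \cdots \times_C \widehat{\rU}^{(C)},
\qquad
\bPhi^q := \bPhi \times_1 \big(\widehat{\rU}^{(1)}\big)^T \times_2 \cdots \times_C \big(\widehat{\rU}^{(C)}\big)^T,
\]
where $\bPhi^q$ has size $\widehat q_1\times\cdots\times\widehat q_C\times K_1\times\cdots\times K_D$. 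This fixes the $C$-ranks at $q_i=\widehat q_i$ and identifies $\widetilde{\rU}^{(i)}$ with $\widehat{\rU}^{(i)}$; the HTT "slices" are then the subtensors $\bPhi^q_{\bi}:=\bPhi^q_{\bi,:,\dots,:}$ for $\bi\in\Omega_C^q$, and it remains to bound their TT ranks.

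Next I would relate the matricizations of each slice $\bPhi^q_{\bi}$ to those of $\bPhi$. For a splitting index $j\in\{1,\dots,D\}$, the matricization $\mbox{matr}_j(\bPhi^q_{\bi})$ has rows indexed by $(k_1,\dots,k_j)$ and columns by $(k_{j+1},\dots,k_D)$ (the $D$ modes). The key observation is that each such matrix is obtained from $\mbox{matr}_{j+C}(\bPhi^q)$ by selecting the single block of rows corresponding to the fixed first-$C$ multi-index $\bi$; hence its row space is contained in the row space of $\mbox{matr}_{j+C}(\bPhi^q)$, giving
\[
\mbox{rank}\big(\mbox{matr}_j(\bPhi^q_{\bi})\big) \le \mbox{rank}\big(\mbox{matr}_{j+C}(\bPhi^q)\big)
\le \mbox{rank}\big(\mbox{matr}_{j+C}(\bPhi)\big) = \widehat r_{j+C}.
\]
The middle inequality holds because $\bPhi^q$ is obtained from $\bPhi$ by multiplying along modes $1,\dots,C$ only, and such mode products (by matrices with orthonormal columns, or indeed any matrices) cannot increase the rank of a matricization that splits all the first $C$ modes onto the row side — one can write $\mbox{matr}_{j+C}(\bPhi^q) = \big((\widehat{\rU}^{(1)})^T\otimes\cdots\otimes(\widehat{\rU}^{(C)})^T\otimes \rI\big)\,\mbox{matr}_{j+C}(\bPhi)$ up to a row permutation, and multiplying on the left does not increase rank. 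Since the TT-SVD of a tensor achieves, for each $k$, a TT rank exactly equal to $\mbox{rank}(\mbox{matr}_k(\cdot))$, each slice $\bPhi^q_{\bi}$ admits a TT representation with compression ranks $r_j^{(\bi)}\le\widehat r_{j+C}$, and with $r_0^{(\bi)}=1$ trivially. To match the statement's equality $r_j^{(\bi)}=\widehat r_{j+C}$ I would pad the TT cores with zero slices if necessary, which is harmless (this is the usual convention that one may always inflate TT ranks).

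Assembling the pieces: with $\widetilde{\rU}^{(i)}=\widehat{\rU}^{(i)}$ and with $\widetilde{\bPhi}^q_{\bi}$ the TT representation of $\bPhi^q_{\bi}$ just constructed, the reconstruction formula \eqref{eq:tPhi} applied with $\eps_C=0$ returns exactly $\bPhi$, so $\mbox{HTT}(\bPhi)$ with the stated ranks represents $\bPhi$ exactly. The main obstacle — really the only nontrivial point — is the rank inequality for the matricizations of the slices, i.e.\ making precise that fixing the first-$C$ multi-index $\bi$ corresponds to extracting a row block of $\mbox{matr}_{j+C}(\bPhi^q)$ and that the mode-$1,\dots,C$ projections do not raise these ranks; this is a bookkeeping argument about how unfoldings and $n$-mode products interact, and the Kronecker-product identity above is the cleanest way to nail it down. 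Everything else (losslessness of exact HOSVD projection, optimality of TT-SVD ranks, zero-padding) is standard.
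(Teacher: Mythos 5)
Your proof is correct, but the key step is argued by a genuinely different route than the paper's. The first stage coincides: both you and the paper project $\bPhi$ exactly (no truncation) onto the column spaces of its first $C$ unfoldings, which fixes the $C$-ranks at $\widehat q_i$. For the $D$-ranks the paper is constructive: it takes the TT-SVD of the HOSVD core $\bC$ (whose TT ranks equal those of $\bPhi$ by the Kronecker identity for matricizations), lifts the last $D$ cores by the corresponding factor matrices, and absorbs the dependence on $\bi$ into the $(C+1)$-st core through the vector $\bv_\bi=\bG^{(1)}_{:,i_1,:}\cdots\bG^{(C)}_{:,i_C,:}$; this exhibits one TT chain whose cores for modes $C+2,\dots,C+D$ are \emph{shared by all slices}, with core sizes exactly $\widehat r_{j+C}$. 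You instead bound the matricization ranks of each slice directly: $\mathrm{matr}_j(\bPhi^q_{\bi})$ is a row block of $\mathrm{matr}_{j+C}(\bPhi^q)$, which is a left multiple of $\mathrm{matr}_{j+C}(\bPhi)$, so its rank is at most $\widehat r_{j+C}$; TT-SVD optimality then gives per-slice ranks $r_j^{(\bi)}\le\widehat r_{j+C}$, and zero padding forces the stated equality. Your argument is more elementary and actually shows the minimal per-slice TT ranks may be strictly smaller than $\widehat r_{j+C}$ — consistent with the paper's remark that the guaranteed representation need not have minimal HTT ranks — whereas the paper's construction buys the extra structural information that only the first core depends on $\bi$, which underpins its subsequent discussion of non-minimality. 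Both are valid proofs of the proposition as stated.
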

\begin{proof}  Let $\{\bC,\rU^{(1)},\dots,\rU^{(C+D)}\}$ be the HOSVD decomposition of $\bPhi$ with the core tensor $\bC$. We note the  identity 
\begin{equation}
\mbox{matr}_k(\bPhi) = \left(\rU^{(k)}\otimes\dots\otimes\rU^{(1)}\right)\mbox{matr}_k(\bC) 
\left(\rU^{(C+D)}\otimes\dots\otimes\rU^{(k+1)}\right)^T,
\end{equation}
where the Kronecker tensor products of $\rU^{(i)}$ are the matrices with orthonormal columns. This implies $\mbox{rank}(\mbox{matr}_k(\bPhi))=\mbox{rank}(\mbox{matr}_k(\bC))$ and hence
the TT-SVD ranks of $\bC$ and $\bPhi$ are the same.
Further, let 
\begin{equation}
\bC\sim\left\{\bG^{(1)},\cdots, \bG^{(C+D)} \right\}
\end{equation}
be the TT-SVD decomposition of $\bC$, where 
$\bG^{(j)}$ are order three tensors of sizes $\widehat r_{j-1}\times \widehat q_j\times  \widehat r_{j}$, and define $\widehat \bG^{(j)}= \bG^{(j)}\times_2 \rU^{(j)}$ for $j=\text{\small $C$}+1,\dots,\text{\small $C+D$}$.  
Then, an HTT decomposition of $\bPhi$ is given by the matrices $\{\rU^{(1)},\dots,\rU^{(C)}\}$ and the TT tensors
\begin{equation}\label{aux434}
   \widetilde\bPhi^q_\bi\sim \left\{\widetilde \bG^{(C+1)}_\bi,\widehat \bG^{(C+2)},\cdots,\widehat \bG^{(C+D)}\right\}, 
\end{equation}
where $\widetilde \bG^{(C+1)}_\bi= \widehat \bG^{(C+1)}\times_1 \bv_\bi$ and $\bv_\bi\in\R^{\widehat r_C}$ is a vector defined through the product of matrices:  $\bv_\bi=\bG^{(1)}_{:,i_1,:}\cdots \bG^{(C)}_{:,i_{C},:}$.  
\end{proof}

The HTT representation guaranteed by Proposition~\ref{prop:httranks} does not necessarily have minimal HTT ranks. In particular, in the representation \eqref{aux434}, only $ \widetilde{\bG}^{(C+1)} $ depends on $ \bi $, and for all $ \bi $, the constructed tensors $ \widetilde{\bPhi}^q_\bi $ have the TT representation with the same ranks, which can be non-optimal. Moreover, we note that the HTT decomposition, by its construction, is independent of the position of the sliced indices in the tensor $ \bPhi $. We consider the first $ C $ for convenience. However, the TT ranks of $ \bPhi $ do depend on the permutation of indices. Therefore, the result of Proposition~\ref{prop:httranks} can be improved to achieve the minimum of TT ranks over all such permutations of modes in $\bPhi$ that preserve the order of the $D$-modes.
 
%=====================================================================
\subsection{Accuracy of HTT completion} 

In this section, we prove an estimate for the accuracy of HTT completion in terms of the accuracy of the three steps for computing $\tbPhi$: sampling, projection, and component-wise completion.  

We recall that $\rU^{(i)}$, $i=1,\dots,C$, are orthogonal matrices of left singular vectors of $i$-th mode unfoldings of the tensor $\bPhi^P$, cf. \eqref{eq:PhiP} and \eqref{eq:Fi}.   Denote the orthogonal projection on the column space of  $\rU^{(i)}$ by $\rP_{(i)}=\rU^{(i)}\big(\rU^{(i)}\big)^T$ and define
\begin{equation}\label{eq:deltai}
    \eps_s=\max\limits_{i=1,\ldots,C}\frac{\|\rP_{(i)}^\perp \Phi^{(i)}\|_{F}}{\|\Phi^{(i)}\|_{F}}, \quad \text{with}~
    \Phi^{(i)}=\mbox{unfold}_i(\bPhi),
\end{equation}
where $\rP_{(i)}^\perp = \mathrm{I} - \rP_{(i)}$.
The values of $\eps_s$ quantify how representative is the sampling of $\bPhi$, with smaller values corresponding to better representation.

Next, the accuracy of the  projected tensor $\bPhi^q$ depends on the threshold $\eps_{C}\ge0$ used in \eqref{eq:qi} to determine the $q$-ranks. Finally, we assume that during the component-wise completion the tensors $\bPhi_\bi^q:=\bPhi_{\bi,:}^q$ are reconstructed with the accuracy $\eps_q\ge0$, i.e., the bound
\begin{equation}\label{eq:epsq}
\|\bPhi_\bi^q-\tbPhi_\bi^q\|_{F}\le \eps_q\|\bPhi_\bi^q\|_{F}
\end{equation}
holds uniformly for all $\bi\in\Omega_{C}^q$.
\smallskip

The following lemma provides the estimate of the completion accuracy.
\begin{prop}\label{L2}
    The following estimate holds
    \begin{equation}\label{eq:accuracy}
        \|\bPhi-\tbPhi\|_{F}\le \big(\sqrt{C}(\eps_{C}+\eps_s)+\eps_q)\big)\|\bPhi\|_{F},
    \end{equation}
with $\eps_{C}$, $\eps_s$, and $\eps_q$ from \eqref{eq:qi}, \eqref{eq:deltai},  and \eqref{eq:epsq}, respectively.
\end{prop}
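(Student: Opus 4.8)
The plan is to decompose the total error $\|\bPhi-\tbPhi\|_F$ according to the three sources of inaccuracy already identified: the sampling (captured by $\eps_s$), the projection/truncation of the fully sampled modes (captured by $\eps_C$), and the component-wise TT-completion in the reduced subspaces (captured by $\eps_q$). First I would introduce the intermediate tensor $\bPhi^{\mathrm{proj}} = \bPhi \times_1 \rP_{(1)} \times_2 \cdots \times_C \rP_{(C)}$, the full projection of $\bPhi$ onto the \emph{exact} HOSVD left-singular subspaces of its own unfoldings, and also the tensor $\bPhi^{\mathrm{red}} = \bPhi^q \times_1 \widetilde{\rU}^{(1)} \times_2 \cdots \times_C \widetilde{\rU}^{(C)}$, i.e.\ $\bPhi$ first projected using the \emph{sampled} reduced bases $\widetilde{\rU}^{(i)}$ and then lifted back. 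Note $\bPhi^{\mathrm{red}} = \bPhi \times_1 \rP_{(1)}^{\Omega} \times_2 \cdots$ where $\rP_{(i)}^{\Omega}=\widetilde{\rU}^{(i)}(\widetilde{\rU}^{(i)})^T$ is the projector built from the \emph{sampled} data, not from $\bPhi$ itself. I would then split
\[
\|\bPhi - \tbPhi\|_F \le \|\bPhi - \bPhi^{\mathrm{red}}\|_F + \|\bPhi^{\mathrm{red}} - \tbPhi\|_F,
\]
handling the first term as the combined sampling-plus-truncation error and the second as the completion error lifted back by the orthonormal-column matrices $\widetilde{\rU}^{(i)}$.

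For the second term, since $\tbPhi = \tbPhi^q \times_1 \widetilde{\rU}^{(1)} \times_2 \cdots \times_C \widetilde{\rU}^{(C)}$ and $\bPhi^{\mathrm{red}} = \bPhi^q \times_1 \widetilde{\rU}^{(1)} \times_2 \cdots$, and the $\widetilde{\rU}^{(i)}$ have orthonormal columns, the $n$-mode products are isometries, so $\|\bPhi^{\mathrm{red}} - \tbPhi\|_F = \|\bPhi^q - \tbPhi^q\|_F$. Expanding this in slices and using \eqref{eq:epsq} slice-by-slice gives $\|\bPhi^q - \tbPhi^q\|_F^2 = \sum_{\bi} \|\bPhi_\bi^q - \tbPhi_\bi^q\|_F^2 \le \eps_q^2 \sum_{\bi}\|\bPhi_\bi^q\|_F^2 = \eps_q^2 \|\bPhi^q\|_F^2 \le \eps_q^2\|\bPhi\|_F^2$, the last inequality because projecting onto a subspace cannot increase the Frobenius norm. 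So the second term is bounded by $\eps_q\|\bPhi\|_F$.

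For the first term, I would telescope through successive mode-wise projections: write $\bPhi - \bPhi^{\mathrm{red}}$ as a sum of $C$ terms, where the $i$-th term replaces the exact tensor by its projection along mode $i$ while the already-processed modes $1,\dots,i-1$ carry the sampled projector and modes $i+1,\dots,C$ carry the identity. Each such term is (up to an isometry on the other modes) of the form $\rP_{(i)}^{\perp,\Omega}\Phi^{(i)}$ where $\rP_{(i)}^{\perp,\Omega} = \rI - \widetilde{\rU}^{(i)}(\widetilde{\rU}^{(i)})^T$. I would then further bound $\|\rP_{(i)}^{\perp,\Omega}\Phi^{(i)}\|_F$ by a triangle inequality: the part discarded by the truncation threshold within the \emph{sampled} subspace contributes $\eps_C\|\bPhi\|_F$ (this is essentially the HOSVD truncation bound \eqref{eq:estEps}, already established, now applied to $\bPhi$ rather than $\bPhi^P$ via the singular values), while the part of $\Phi^{(i)}$ lying outside the full sampled column span contributes $\eps_s\|\bPhi\|_F$ by definition \eqref{eq:deltai}. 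Summing the $C$ mode contributions and combining with the second term yields $\|\bPhi-\tbPhi\|_F \le (\sqrt{C}(\eps_C+\eps_s)+\eps_q)\|\bPhi\|_F$; the factors of $\sqrt C$ arise from collecting $C$ squared contributions under a single square root, or from $C$ triangle-inequality terms followed by Cauchy–Schwarz.

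The main obstacle I anticipate is the bookkeeping in the telescoping step: one must be careful that the orthonormal-column matrices appearing on the \emph{unprocessed} modes genuinely act as isometries on the relevant unfolding (they do, since mode-$n$ multiplication by a matrix with orthonormal columns preserves Frobenius norm), and that the projector $\rP_{(i)}^{\perp,\Omega}$ on mode $i$ commutes past the projectors already installed on modes $1,\dots,i-1$ (they act on disjoint mode indices, so they commute). A secondary subtlety is relating $\eps_C$, which is defined in \eqref{eq:qi} through the singular values of the \emph{sampled} unfolding $\rF^{(i)}$ of $\bPhi^P$, to the truncation error of the unfolding $\Phi^{(i)}$ of the \emph{full} tensor $\bPhi$; I would argue that the truncation discards exactly the tail singular directions of the sampled Gram operator, so within the sampled column space the discarded energy is controlled by $\eps_C$ times $\|\rP_{(i)}\Phi^{(i)}\|_F \le \|\Phi^{(i)}\|_F$, while the complementary energy outside the sampled span is precisely what $\eps_s$ measures — and these two contributions are orthogonal, so they add in the squared sense, again producing the stated form.
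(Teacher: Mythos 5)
Your proposal is correct and follows essentially the same route as the paper: the same initial triangle-inequality split into the lifted completion error (handled slice-wise via the isometry of the orthonormal-column matrices $\widetilde{\rU}^{(i)}$, giving $\eps_q\|\bPhi\|_F$) and the mode-wise projection error (handled by the telescoping decomposition into mutually orthogonal residuals, which is where the $\sqrt{C}$ factors come from). The only difference is cosmetic ordering on the projection term --- you telescope with the truncated projectors first and then split each mode's residual into its orthogonal $\eps_s$ and $\eps_C$ components, whereas the paper first inserts the fully projected tensor $\widehat\bPhi=\bPhi\times_1\rP_{(1)}\cdots\times_C\rP_{(C)}$, bounds the truncation step by the HOSVD quasi-optimality property, and telescopes only the sampling residual --- and both orderings rest on exactly the same two facts (mutual orthogonality of the mode-wise residuals, and the interpretation of $\eps_C$ through the singular values of the sampled unfoldings), yielding the same bound.
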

\begin{proof} By the triangle inequality we have
\begin{equation}\label{aux473}
      \|\bPhi-\tbPhi\|_{F}\le  \|\bPhi-\bPhi^q \times_1 \widetilde{\rU}^{(1)} 
\ldots \times_{C} \widetilde{\rU}^{(C)}\|_{F}+\|(\bPhi^q-\widetilde{\bPhi}^q) \times_1 
\widetilde{\rU}^{(1)}  \ldots \times_{C} \widetilde{\rU}^{(C)}\|_{F}.
\end{equation}
Using the definition of the Frobenious norm, its invariance under the tensor-matrix products with orthogonal $\widetilde{\rU}^{(i)}$'s, and \eqref{eq:epsq}, we estimate  the second term on the right hand side:
\begin{equation}
\label{aux496}
\begin{aligned}
\|(\bPhi^q-\widetilde{\bPhi}^q) \times_1 
\widetilde{\rU}^{(1)}  \ldots \times_{C} \widetilde{\rU}^{(C)}\|_{F}^2 & =
\|\bPhi^q-\widetilde{\bPhi}^q\|_{F}^2=
\sum_{\bi \in \Omega_C^q}\|\bPhi^q_\bi-\widetilde{\bPhi}^q_\bi\|_{F}^2 \\
& \le  \eps_q^2\sum_{\bi \in \Omega_C^q} \|\bPhi_\bi^q\|_{F}^2=\eps_q^2\|\bPhi\|_{F}^2.
 \end{aligned}
\end{equation}
To handle the first term on the right hand side of  \eqref{aux473}, define auxiliary tensor $\widehat\bPhi=\bPhi \times_1 {\rP}_{(1)} 
\ldots \times_{C} \rP_{(C)}$. It is easy to see that $\bPhi^q \times_1 \widetilde{\rU}^{(1)} 
\ldots \times_{C} \widetilde{\rU}^{(C)}$ is the truncated HOSVD of  $\widehat\bPhi$ and so it satisfies (see \cite[Property 10]{de2000multilinear}) the bound 
\begin{equation} 
\|\widehat\bPhi-\bPhi^q \times_1 \widetilde{\rU}^{(1)} 
\ldots \times_{C} \widetilde{\rU}^{(C)}\|_{F}\le\sqrt{C}\eps_{C}\|\widehat\bPhi\|_{F}.
\end{equation}
Since $\widehat\bPhi$ is an orthogonal projection of $\bPhi$ along the first $C$ modes, it also holds $\|\widehat\bPhi\|_{F}\le\|\bPhi\|_{F}$.
Hence by the triangle inequality we obtain
\begin{equation}
\label{aux510}
\|\bPhi-\bPhi^q \times_1 \widetilde{\rU}^{(1)} 
\ldots \times_{C} \widetilde{\rU}^{(C)}\|_{F}\le 
\|\bPhi-\widehat\bPhi\|_{F}+ 
\sqrt{C}\eps_{C}\|\bPhi\|_{F}.
\end{equation}
Consider now the decomposition 
\begin{equation}
\begin{aligned}
\bPhi-\widehat\bPhi=\bPhi \times_1 {\rP}_{(1)}^\perp + 
(\bPhi \times_2 {\rP}_{(2)}^\perp)\times_1 {\rP}_{(1)} + (\bPhi \times_3 {\rP}_{(3)}^\perp)\times_1 {\rP}_{(1)}\times_2 {\rP}_{(2)}+\ldots\\
+(\bPhi \times_C {\rP}_{(C)}^\perp)\times_1 {\rP}_{(1)}\ldots \times_{C-1} \rP_{(C-1)}.
\end{aligned}
\end{equation}
The terms in this decomposition are mutually orthogonal, as  can be seen from the identities
\begin{equation}
\begin{aligned}
 {\langle\bA \times_i {\rP}_{(i)}^\perp,\bB \times_i {\rP}_{(i)}\rangle_F} & = 
\langle{\rP}_{(i)}^\perp A_{(i)},{\rP}_{(i)} B_{(i)}\rangle_{\ell^2} \\
& = \mbox{tr}\left({\rP}_{(i)}^\perp A_{(i)} B_{(i)}^T {\rP}_{(i)}\right)= 
\mbox{tr}\left({\rP}_{(i)}{\rP}_{(i)}^\perp A_{(i)} B_{(i)}^T \right)=0,
\end{aligned}
\end{equation}
which hold for two tensors $\bA$ and $\bB$ of the same sizes as $\bPhi$ and their unfoldings along the $i$th mode $A_{(i)}$ and $B_{(i)}$, respectively.
Employing this decomposition, the orthogonality property and \eqref{eq:deltai}, we obtain the bound  
\begin{equation}
\begin{aligned}
\|\bPhi-\widehat\bPhi\|_{F}^2&= \|\bPhi \times_1 {\rP}_{(1)}^\perp\|_{F}^2 + \|(\bPhi \times_2 {\rP}_{(2)}^\perp)\times_1 {\rP}_{(1)}\|_{F}^2+\ldots\\
& \qquad\qquad + \|(\bPhi \times_C {\rP}_{(C)}^\perp)\times_1 {\rP}_{(1)}\ldots \times_{C-1} \rP_{(C-1)}\|_{F}^2 \\
&\le \|\bPhi \times_1 {\rP}_{(1)}^\perp\|_{F}^2 + \|\bPhi \times_2 {\rP}_{(2)}^\perp\|_{F}^2+\ldots+ \|\bPhi \times_C {\rP}_{(C)}^\perp\|_{F}^2 \\
&= \|\rP_{(1)}^\perp \Phi^{(1)}\|_{F}^2 + \|\rP_{(2)}^\perp \Phi^{(2)}\|_{F}^2+\ldots+ \|\rP_{(C)}^\perp \Phi^{(C)}\|_{F}^2 \\
&\le \eps_s^2 \sum_{i=1}^C\|\Phi^{(i)}\|_{F}^2 =
C\eps_s^2 \|\bPhi\|_{F}^2,
\end{aligned}
\label{aux530}
\end{equation}
where we used the fact that the Frobenious norm of a tensor equals to the Frobenious norm of any unfolding of it.
Finally, the bound in \eqref{eq:accuracy} follows from \eqref{aux473}, \eqref{aux496}, \eqref{aux510}, and \eqref{aux530}.
\end{proof}

%=====================================================================
\subsection{Adaptive completion.}

In many applications, and in particular for building  ROM  for parametric dynamical systems with accessible error bounds, one is interested in controlling the completion error, i.e., in ensuring that the estimate 
    \begin{equation}\label{eq:estUpper}
        \|\bPhi-\tbPhi\|_{F}\le\eps\|\bPhi\|_{F},
    \end{equation}
holds for a desired $\eps\ge0$. Such a bound is provided by the result in \eqref{eq:accuracy}. In Algorithm~\ref{Alg1} we can set $\eps_{C}$ and  {have a control of $\eps_q$ depending on the  fitting algorithm used for pointwise tensor completion. However $\eps_s$} is implicitly determined by the the index set $\widetilde{\Omega}_D$.        
This motivates an adaptive slice sampling completion algorithm, which ensures that a proxy of \eqref{eq:estUpper} is fulfilled. For the adaptive completion we define a testing set of indexes ${\Omega}_D^{\rm test}\subset\Omega_D$ and gradually increase the training set  $\widetilde{\Omega}_D$ up to the point when the completed tensor satisfies
   \begin{equation}\label{eq:estTest}
        \|\bPhi|_{\Omega^{\rm test}}-\tbPhi|_{\Omega^{\rm test}}\|_{F}\le\eps\|\bPhi|_{\Omega^{\rm test}}\|_{F},\quad\text{with}~\Omega^{\rm test}=\Omega_{C}\otimes {\Omega}_D^{\rm test}.
    \end{equation}

Assuming that for any given $\bj\in \Omega_D$ we may retrieve a slice $\bPhi_{:,\bj}$ of $\bPhi$, the adaptive completion algorithm can be summarized as follows.

\begin{algorithm}[Adaptive slice sampling tensor completion]
\label{Alg1a}
%\caption{Adaptive slice sampling  tensor completion}
~\\[1ex]
\textbf{Input:} Testing set
${\Omega}_D^{\rm test}\subset \Omega_D$, initial training set $\widetilde{\Omega}_D\subset \Omega_D$, threshold $\eps$, maximum number of steps $N_{\max}$, and increments $P_i\in\mathbb{N}_+$, $i=1,\ldots,N_{\max}$.
\begin{enumerate}
%\item Form the tensor $\widehat\bPhi$ from the data $\cD$ as in \eqref{eq:PhiP}. 
\item For $\bj\in\widetilde{\Omega}_D$ retrieve slices $\bPhi_{:,\bj}$ to form the training data $\cD$.
\item For $\bj\in{\Omega}_D^{\rm test}$ retrieve slices $\bPhi_{:,\bj}$ to compute $\bPhi|_{\Omega^{\rm test}}$.
\item Let $\eps_{C}=\eps/\sqrt{C}$.
\item For $i = 1,\ldots,N_{\max}$:
\begin{itemize}
\item[(a)] Execute Algorithm 1 with inputs $\widetilde{\Omega}_D$, $\cD$, and $\eps_C$;
\item[(b)] Break if \eqref{eq:estTest} holds;
\item[(c)] Add $P_i$ more indices  to $\widetilde{\Omega}_D$, update $\cD$.
\end{itemize}
\end{enumerate}
\noindent \textbf{Output:} $\rm{HTT} \big( \tbPhi \big)$ and the error $\|\bPhi|_{\Omega^{\rm test}}-\tbPhi|_{\Omega^{\rm test}}\|_{F}/\|\bPhi|_{\Omega^{\rm test}}\|_{F}$.
\end{algorithm}

\begin{remark}[Alternative hybridization]~\\
An alternative hybridized completion strategy can be based on a joint reduced basis for the first $C$ modes. Let
$
\rA=\mbox{matr}_{C}(\bPhi)\in
\mathbb{R}^{|M|\times |K|},$ $
|M|=\prod_{i=1}^C M_i,$ $|K|=\prod_{j=1}^D K_j,
$
and consider an $\eps$-truncated SVD
\[
\rA\approx \rU\Sigma\rV^T,\qquad
\rU\in \mathbb{R}^{|M|\times \widetilde r}.
\]
For $\eps=0$, one has $\widetilde r=\widehat r_C$. If the singular values of $\rA$ decay sufficiently fast so that
$
\widetilde r < \prod_{i=1}^C q_i,
$
then one may project $\bPhi$ onto the joint space-time basis given by the columns of $\rU$. This can be done by vectorizing the first $C$ modes for $\bPhi$ or, equivalently, the columns of $\rU$ may be reshaped back into tensors of size $M_1\times\cdots\times M_C$. This gives $\widetilde r$ coefficient tensors of size $K_1\times\cdots\times K_D$, to which TT-completion can be applied component-wise.

In practice, the matrix $\rU$ can be approximated  by using the right singular vectors of the last-mode unfolding of $\bPhi^P$. This alternative replaces the $\prod_{i=1}^C q_i$ component-wise TT-completion problems in the current HTT construction by $\widetilde r$ such problems, at the price of storing and working with a joint basis in $\mathbb{R}^{|M|}$. We plan to study this alternative approach to hybridization and completion elsewhere.
\end{remark}

%=====================================================================
\section{ROM for parametric dynamical systems}
\label{sec3}

%=====================================================================
\subsection{Projection TROM} 

For fast and accurate computations of trajectories $\bu(t,\balpha)$  {solving~\eqref{eqn:GenericPDE}} for any $\balpha\in\cA$, we consider a projection based ROM which uses the proposed HTT LRTD instead of the conventional POD as the dimension reduction technique. The HTT decomposition allows for the recovery of information about the parameter dependence of reduced spaces from a smaller set of pre-computed snapshots. This information is exploited  for building parameter specific ROMs. The LRTD–ROM was introduced in \cite{mamonov2022interpolatory} and further developed and analyzed in \cite{mamonov2024tensorial,mamonov2023analysis}. However, in those earlier works it is assumed that a full snapshot tensor $\bPhi$ is available and therefore its LRTD 
in one of the standard tensor rank revealing formats (e.g., CP, Tuker, and TT) can be computed.

 {For the sake of exposition, we assume that all spatial degrees of freedom are vectorized into a single mode, so that only space and time remain as physical modes; i.e., $C = 2$ for the rest of this section.} In projection TROM we aim to find a low dimensional subspace $V^\ell(\balpha)\in\R^{M_1}$ with $\ell=\dim(V^\ell(\balpha))$,
which is parameter-specific and an approximation  $\bu^{\rm rom}(t,\balpha)$ to $\bu(t,\balpha)$ is found by solving  \eqref{eqn:GenericPDE} projected onto $V^\ell(\balpha)$. This TROM solution $\bu^{\rm rom}(t,\balpha)$ is then given by its coordinates in an orthogonal basis for $V^\ell(\balpha)$.

In turn, the orthogonal basis of $V^\ell(\balpha)$ is recovered by its coordinates in the basis of the \emph{universal reduced space} $\widetilde{U}$, which is the span of all first-mode fibers of the low-rank tensor:
\begin{equation}\label{Univ}
    	\widetilde{U} = \text{range}\big(\widetilde\Phi_{(1)}\big).
\end{equation}
By the construction of $\widetilde\bPhi$, an orthogonal basis for $\widetilde{U}$ is given by the columns of the matrix $\rU^{(1)}$ from \eqref{eq:SVD1}.

In an interpolatory version of the TROM (see \cite{olshanskii2024approximating} for a non-interpolatory version), one finds  $V^\ell(\balpha)$ through interpolation procedure for the slices of $\widetilde\bPhi$. 
To define it,  we first consider an interpolation operator that approximates a smooth function  $g:\cA\to \R$ using its values at the grid nodes $\hcA$. More precisely, we assume  
$\bchi^i \,:\, \cA \to \mathbb{R}^{K_i},\quad i=1,\dots,D,$
such that for  any continuous function $g:\cA\to \R$,
\begin{equation}\label{Interp}
I(g):= \sum_{k_1=1}^{K_1}\dots \sum_{k_D=1}^{K_D}  \left(\bchi^1 (\balpha ) \right)_{k_1}\dots \left(\bchi^D (\balpha ) \right)_{k_D}
g\big(\halpha_1^{k_1},\dots,\halpha_D^{k_D}\big)
\end{equation}
defines an interpolant for
$g$.  One straightforward choice is  the Lagrange interpolation of order $p$: for any  $\balpha \in \cA$, let $\widehat{\alpha}_i^{i_1}, \ldots, \widehat{\alpha}_i^{i_p}$ 
be the $p$ closest grid nodes to $\alpha_i$ on $[\alpha_i^{\min}, \alpha_i^{\max}]$, for $i=1,\ldots,D$.
Then, 
\begin{equation}
	\label{eqn:lagrange}
	\big(\bchi^i (\balpha)\big)_j = 
	\begin{cases} 
		\prod\limits_{\substack{m = 1, \\ m \neq k}}^{p}(\widehat{\alpha}_i^{i_m}-\alpha_i) \Big/ 
		\prod\limits_{\substack{m = 1, \\ m \neq k}}^{p}(\widehat{\alpha}_i^{i_m}-\widehat{\alpha}_i^j), 
		& \text{if } j = i_k \in \{i_1,\ldots,i_p\}, \\
		\qquad\qquad\qquad\qquad\qquad\qquad\qquad 0, & \text{otherwise}, \end{cases}
\end{equation}
are the entries of $\bchi^i (\balpha)$ for $j=1,\dots,K_i$.

Given $\bchi^i$, we introduce the `local' low-rank matrix 
$\widetilde{\Phi} (\balpha)$ via the in-tensor interpolation procedure for tensor $ \widetilde{\bPhi} $:
\begin{equation}
	\label{eqn:extractbt}
	\widetilde{\Phi} (\balpha) = \widetilde{\bPhi} 
	\times_3 \bchi^1(\balpha) \times_4 \bchi^2(\balpha) \dots \times_{D+2} \bchi^D(\balpha) 
	\in \R^{M_1 \times M_2}.
\end{equation}
If $\balpha = \bhalpha \in \hcA$, then $\bchi^i(\hbalpha)$ simply encodes the position of $\widehat{\alpha}_i$ 
among the grid nodes on $[\alpha^{\min}_i, \alpha^{\max}_i]$. Therefore, for $\eps=0$ the  
matrix $\widetilde{\Phi} (\hbalpha)$ is  exactly the matrix of all  
snapshots for the particular $\hbalpha$. For a general $\balpha\in \cA$, the matrix
$\widetilde{\Phi} (\balpha)$ is the result of interpolation between  snapshots (approximately) recovered by solving the completion problem. 
We will show that this interpolation is easy to do  if $\widetilde{\bPhi}$ is in HTT format.
Specifically, for an arbitrary given $\balpha \in \cA$ the parameter-specific local reduced space $V^\ell(\balpha)$ of dimension $\ell$ is the space spanned by the first  $\ell$ left singular vectors of $\widetilde{\Phi} (\balpha)$:
\begin{equation}
\label{Vrom}V^\ell(\balpha)=\mbox{range}\left( [\rS(\balpha)]_{1:\ell} \right),
\end{equation}
where
\begin{equation}
\widetilde{\Phi} (\balpha) = \rS(\balpha) \Sigma(\balpha) \rV(\balpha)^T,
\end{equation}
is the SVD of $\widetilde{\Phi} (\balpha)$.

%=====================================================================
\subsection{In-tensor interpolation and finding the local basis}

A remarkable fact is that we do \emph{not} need to build the matrix $\widetilde{\Phi} (\balpha)$ and compute its SVD to find an orthogonal basis for $V^\ell(\balpha) $ once $\widetilde{\bPhi}$ is given in the HTT format. 
To see this, assume $\widetilde{\bPhi}$ is given in the HTT format with $C$-ranks $q_1$ and $q_2$ and define the parameter-dependent 
\emph{core matrix} $\rC_\chi (\balpha) \in \R^{q_1 \times q_2}$ as follows
\begin{equation}
\label{eqn:C_TT}
[\rC_\chi (\balpha)]_{i_1,i_2} =\widetilde\bPhi_\bi^q \times_1 \bchi^1 (\balpha)\dots \times_D \bchi^D (\balpha),\quad\text{with}~\bi=(i_1,i_2).
\end{equation} 
Practically, the tensor matrix products in \eqref{eqn:C_TT} are computed as follows. Each $\bg^\bi_j$  from \eqref{eqn:bphiqitt} is a $\tr^\bi_{j-1}\times K_j\times \tr^\bi_j$ tensor, then  $G^\bi_j=\bg^\bi_j\times_2\bchi^j (\balpha)$ is a $\tr^\bi_{j-1}\times \tr^\bi_j$ matrix and we calculate $[\rC_\chi (\balpha)]_{i_1,i_2}=G^\bi_1\dots G^\bi_D$ as a product of $D$ small size matrices. 

By the definition of $\widetilde{\Phi} (\balpha)$ and \eqref{eq:tPhi} we have
\begin{equation}
	\label{eqn:aux475}
 \begin{split}
	\widetilde{\Phi} (\balpha) &= \widetilde{\bPhi}\times_{3} \bchi^1 (\balpha)\dots\times_{D+2} \bchi^D (\balpha)\\
 &= (\widetilde{\bPhi}^q\times_1 
\widetilde{\rU}^{(1)}\times_2 
\widetilde{\rU}^{(2)}) \times_{3} \bchi^1 (\balpha)\dots\times_{D+2} \bchi^D (\balpha)\\
 &= (\widetilde{\bPhi}^q\times_{3} \bchi^1 (\balpha)\dots\times_{D+2} \bchi^D (\balpha)) \times_1 
\widetilde{\rU}^{(1)}\times_2 
\widetilde{\rU}^{(2)} \\
&= \rC_\chi (\balpha) \times_1 
\widetilde{\rU}^{(1)}\times_2 
\widetilde{\rU}^{(2)} =    \widetilde{\rU}^{(1)}   \rC_\chi (\balpha) (\widetilde{\rU}^{(2)})^T.
 \end{split}
\end{equation}
We combine this representation of $\widetilde{\Phi} (\balpha)$
with the SVD of the core matrix
\begin{equation}
 \rC_\chi (\balpha)  = \rU_c \Sigma_c \rV_c^T
\label{eqn:coresvdtt}
\end{equation}
to obtain
\begin{equation}
\widetilde{\Phi}(\balpha) = 
\left(\widetilde{\rU}^{(1)}\rU_c\right)\Sigma_c \left(\widetilde{\rU}^{(2)}\rV_c\right)^T.
\label{eqn:Phie_TT}
\end{equation}
The right-hand side of \eqref{eqn:Phie_TT} is the thin SVD of $\widetilde{\Phi}(\balpha)$, 
since the matrices $\widetilde{\rU}^{(1)}$, $\rU_c$, $\widetilde{\rU}^{(2)}$, and $\rV_c$ are all orthogonal. We conclude that 
\emph{the coordinates $\left\{\bbeta_1(\balpha),\dots,\bbeta_\ell(\balpha)\right\}$ of the local reduced basis 
in the orthogonal basis of the universal space $\widetilde{U}$ are the first $\ell$ columns of $\rU_c$.} The parameter-specific ROM basis is then 
$\{ \bz_i(\balpha) \}_{i=1}^{\ell}$, with $\bz_i(\balpha)=\widetilde{\rU}^{(1)}\bbeta_i(\balpha)$.

Note that the explicit computation of the basis functions $\bz_i(\balpha)$ is \emph{not} required during the online stage.
 {In particular, the projection of the dynamical system onto the space $V^\ell(\balpha) = \mathrm{span}\{\bz_i(\balpha)\}_{i=1}^{\ell}$ is performed using only the matrix of basis coordinates $\widetilde{\rU}^{(1)}$; see Algorithm~\ref{alg:CTROM}.}
Therefore, the essential information about $\widetilde{\bPhi}$ needed for the online part includes only the set of TT-tensors $\widetilde\bPhi_\bi$:
\begin{equation}
\label{eqn:coreTT}
\mbox{online}(\widetilde{\bPhi}) = 
\left\{\widetilde\bPhi_\bi,~~\bi\in\Omega^q_{C}
\right\}.
\end{equation}

%=====================================================================
\subsection{Summary of the completion based TROM}

Here we summarize the two stage completion based TROM (CTROM) approach. The core of the offline stage is finding a low-rank approximation of the snapshot tensor in the HTT format through 
the adaptive slice sampling completion Algorithm~\ref{Alg1a}. Computing one slice of $\bPhi$ amounts to integrating \eqref{eqn:GenericPDE} numerically for a given fixed parameter $\hbalpha\in\hcA$, a task performed by the numerical solver 
$\cS: \balpha \to \Phi(\balpha)$. The second step of the offline stage is to project the system \eqref{eqn:GenericPDE} 
onto the universal space $\widetilde{U}$, as defined in \eqref{Univ}. The orthogonal basis for $\widetilde{U}$ is stored offline, while the projected system is passed to online stage. The particular computational details of the projection process depend on both the system form and its numerical solver $\cS$. Detailed examples of system projections can be found \cite{mamonov2022interpolatory} for the two particular systems studied in Section~\ref{s:num}.

At the online stage, for any parameter $\balpha\in\cA$ one computes the orthogonal basis of the local reduced space $V^\ell(\balpha)$ represented by its coordinates in $\widetilde{U}$ and the system is projected the second time onto $V^\ell(\balpha)$. The computations at the online stage operate only with objects of reduced dimensions. 

We summarize the two stage CTROM approach outlined above in Algorithm~\ref{alg:CTROM}.

% \vskip0.05in
\begin{algorithm}[Completion based TROM]
%\caption{Completion based TROM}
\label{alg:CTROM} 
~
\begin{itemize}
\item \textbf{Offline stage}.\\
\textbf{Input:} 
\begin{itemize}
\item Numerical solver for \eqref{eqn:GenericPDE} that computes the snapshot matrix \eqref{eqn:phialpha} for a given $\balpha \in \cA$, i.e., $\cS:\balpha \to \Phi(\balpha)$;
\item Parameter grid $\hcA$ as in \eqref{eqn:grid}; 
\item Target accuracy $\eps>0$;
\item Testing set ${\Omega}_D^{\rm test} \subset \Omega_D$, initial training set $\widetilde{\Omega}_D \subset \Omega_D$, maximum number of steps $N_{\max}$, increments $P_i\in\mathbb{N}_+$, $i=1,\ldots,N_{\max}$.
\end{itemize}
\begin{enumerate}
\item  Execute Algorithm~\ref{Alg1a} with inputs ${\Omega}_D^{\rm test}$, $\widetilde{\Omega}_D$, $\eps$, $N_{\max}$, and $P_i\in\mathbb{N}_+$, $i=1,\ldots,N_{\max}$, to compute slice sampling completion $\rm{HTT} \big(\tbPhi \big)$
using the numerical solver $\cS$ in Step 4(c) for updating the data $\cD$;
\item Project the system \eqref{eqn:GenericPDE} onto the universal ROM space $\widetilde{U}$ using the orthonormal basis
stored in $\widetilde{\rU}^{(1)}$ from $\rm{HTT} \big( \tbPhi \big)$ to obtain the projected solver $\widetilde{\cS}$ from the full order solver $\cS$;
\end{enumerate}
\textbf{Output:} $\rm{HTT} \big( \widetilde{\bPhi} \big)$,
projected solver $\widetilde{\cS}$.
%orthogonal basis on the universal ROM space $\widetilde{U}$. 
\item \textbf{Online stage}. \\
\textbf{Input:} $\mbox{\rm online}(\widetilde{\bPhi})$ as defined in \eqref{eqn:coreTT}, 
reduced space dimension 
$\ell \le q_{\min} = \min\{q_1,q_2\}$, 
parameter vector $\balpha \in \cA$, and projected solver $\widetilde{\cS}$;
\begin{enumerate}
\item Use tensors $\widetilde\bPhi_i$ from $\mbox{\rm online}(\widetilde{\bPhi})$ to assemble the core matrix 
$\rC_\chi (\balpha) \in \R^{q_1 \times q_{2}}$ as in \eqref{eqn:C_TT};
\item Compute the SVD of the core matrix $\rC_\chi (\balpha) = \rU_c \Sigma_c \rV_c^T$ with $\rU_c = [\widetilde{\bu}_1, \ldots, \widetilde{\bu}_{q_{\min}}]$;
\item Set $\bbeta_i(\balpha) =\widetilde{\bu}_i$, {\small$i = 1,\ldots,\ell$};
\item Project the solver $\widetilde{\cS}$ onto the local reduced basis with coordinates $\{ \bbeta_i(\balpha) \}_{i=1}^{\ell}$ to obtain the parameter-specific CTROM solver $\widetilde{S}_{\ell}(\balpha)$.
\end{enumerate}
\textbf{Output:} Coordinates of the reduced basis in $\widetilde{U}$:
$\{ \bbeta_i(\balpha) \}_{i=1}^{\ell} \subset \mathbb{R}^{q_1}$ and the CTROM solver $\widetilde{S}_{\ell}(\balpha)$.\\
\end{itemize}
\end{algorithm}

 {
\subsection{Complexity of CTROM}
The computational requirements of the completion-based TROM are determined by the following steps:  
\begin{itemize}
	\item[(i)] \textbf{Offline stage}: Completion of $\widetilde{\bPhi}$ in the HTT format; 
	\item[(ii)] Transfer of the $\mbox{online}(\widetilde{\bPhi})$ component of the compressed tensor to the online stage;
	\item[(iii)] \textbf{Online stage}: Computation of the coordinates of the parameter-specific reduced basis for a given $\balpha$;
	\item[(iv)] Solution of \eqref{eqn:GenericPDE} projected onto the reduced space.
\end{itemize}

Since step (iv) is common to all projection-based ROMs, we focus on steps (i)--(iii). The offline stage is dominated by the completion algorithm and repeated FOM solves used to generate the training set. The number of FOM calls scales at least as $O(\sum_{j=1}^D r^{\rm max}_j K_j r^{\rm max}_{j+1})$, where $r^{\rm max}_j=\max_{\bi\in\Omega_C^q}r^\bi_j$. The cost of the completion step depends on the chosen scheme; in the numerical experiments we employ the stable rank-adaptive ALS (SALSA) method from~\cite{grasedyck2019stable}, with  complexity $O(\sum_{j=1}^D |\Omega_C^q| |\widetilde{\Omega}_D| |r^{\rm max}_j r^{\rm max}_{j+1}|^2)$
per adaptive step.  

The quantity $\mbox{online}(\widetilde{\bPhi})$, defined in \eqref{eqn:coreTT}, determines the information communicated to the online stage in step (ii), which amounts to $O(|\Omega_C^q|\sum_{j=1}^D r^{\rm max}_j K_j r^{\rm max}_{j+1})$ degrees of freedom, plus the projected operator $\widetilde{S}$.  

In step (iii), the cost of computing the $\balpha$-specific reduced basis is governed by the interpolation procedure and the evaluation of the leading $n$ left singular vectors of the core matrix. Since the vectors $\be^i(\balpha)$ are sparse (typically $p=2$ or $3$ nonzero entries in Cartesian sampling), forming the core matrix $\rC(\balpha)$ requires  
$
O \Big( \sum_{i=2}^{D} \widetilde{r}_{i-1}\widetilde{r}_{i}\widetilde{r}_{i+1} \Big)
$  
operations. The subsequent SVD involves only the small $\widetilde{r}_{1}\times \widetilde{r}_{D+1}$ matrix. If a reduced basis in the physical space is required, the vectors are recovered as linear combinations of the columns of $\rU$, with cost $O(|\Omega_C^q|\widetilde{r}_1\ell)$.  

}

%=====================================================================
\section{Numerical examples} 
\label{s:num} \label{sec4} 

We assess the performance of the sliced sampled completion and the completion based tensor ROM numerically in two examples of parametric dynamical systems. 
To address the component-wise completion problems for $\bPhi^q_\bi$, we use SALSA method in the TT format from~\cite{grasedyck2019stable}, which ensures \eqref{eq:estTest} with prescribed $\varepsilon$.  All other input parameters of SALSA are left at default values. The corresponding compression $D$-ranks are determined adaptively during the completion process.
We note that SALSA method does not necessarily produce a \textit{minimal}-rank TT tensor that fits the data within the prescribed tolerance, and thus it can be viewed as an approximate solution to the completion problem~\eqref{CProblem2}.

The code of Matlab implementation of CTROM used for the numerical experiments is available upon request.

%=====================================================================
\subsection{Parameterized heat equation}
\label{sec:heat}

The first example is a dynamical system corresponding to the heat equation
\begin{equation}
u_t(\bx, t, \balpha) = \Delta u(\bx, t, \balpha), \quad \bx \in \Omega, \quad t \in (0, T)
\label{eqn:heat}
\end{equation}
in a rectangular domain with three holes 
$\Omega = \Omega_r \setminus (\Omega_1 \cup \Omega_2 \cup \Omega_3)$ with $\Omega_r = [0,10] \times [0,4]$, as 
shown in Figure~\ref{fig:dom3hole}. Zero initial condition is enforced and the terminal time is set to $T = 20$.
The system has $D = 4$ parameters that enter the boundary conditions:
\begin{eqnarray}
\left. (\bn \cdot \nabla u + \alpha_1(u-1)\,) \right|_{\Gamma_o} & = & 0,
\label{eqn:bco} \\
\left. \left( \bn \cdot \nabla u + \frac{1}{2} u \right) \right|_{\partial \Omega_j} & =&  \frac{1}{2} \alpha_{j+1},
\quad j=1,2,3, \label{eqn:bcj} \\
\left. (\bn \cdot \nabla u) \right|_{\partial \Omega_r \setminus \Gamma_o} & = & 0,
\label{eqn:bcins}
\end{eqnarray}
where, $\bn$ is the outer unit normal and $\Gamma_o = 0 \times [0,4]$ is the left side of the rectangle $\Omega_r$.
The parameter domain is the box $\cA = [0.01, 0.501] \times [0, 0.9]^3$. The system \eqref{eqn:heat}--\eqref{eqn:bcins} is discretized with $P_2$ finite elements on a quasi-uniform 
triangulation of $\Omega$ with maximum element size $h$.

\begin{figure}[ht]
\begin{center}
\includegraphics[width=0.5\textwidth]{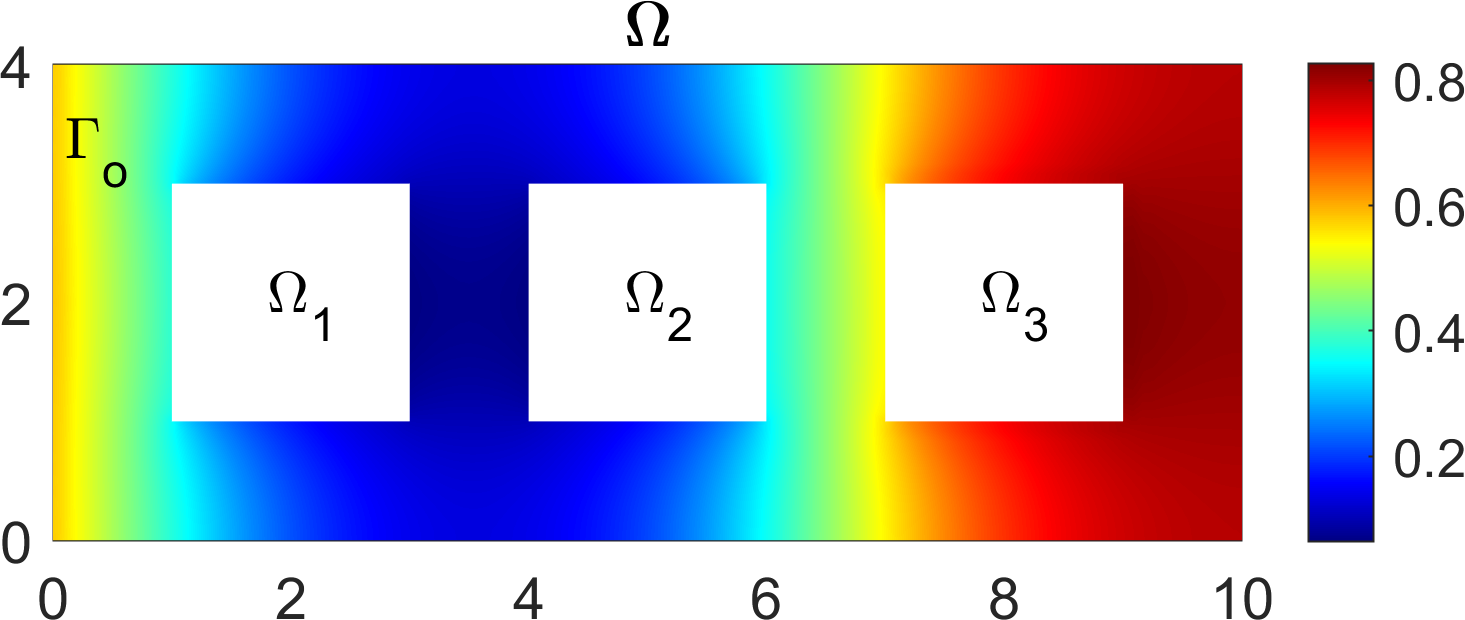} 
\end{center}
\caption{Domain $\Omega$ and the solution $u(\bx, T, \balpha)$ of \eqref{eqn:heat}--\eqref{eqn:bcins} corresponding to $\balpha = (0.5, 0, 0, 0.9)^T$.}
\label{fig:dom3hole}
\end{figure}

In the numerical experiments below we assess the performance of CTROM for the system \eqref{eqn:heat}--\eqref{eqn:bcins}. For this purpose we employ the discrete $L^2(0,T;L^2(\Omega))$ norm error for a prescribed parameter value $\balpha$, denoted by 
\begin{equation}
E_{\balpha} = \Delta t \sum_{n=1}^{N} \left\| u(\;\cdot\;,t_n,\balpha) -  u_\ell(\;\cdot\;,t_n,\balpha) \right\|^2_{L^2(\Omega)},
\label{eqn:erra}
\end{equation}
where $u$ is the true (numerical) solution of \eqref{eqn:heat}--\eqref{eqn:bcins}, while $u_\ell$ is the CTROM solution.
In order to evaluate CTROM accuracy over the whole parameter domain $\cA$ we choose a testing set ${\cB} = \{ \widetilde{\balpha}_k \}_{k=1}^{\widetilde{K}} \subset \cA$ and compute the following maximum and mean quantities
\begin{eqnarray}
{E}_{\max} & = & 
\left( \max_{k \in \{1,2,\ldots,\widetilde{K}\} } E_{\widetilde{\balpha}_k} \right)^{1/2}, 
\label{eqn:errestmax} \\
{E}_{\text{mean}} & = & 
\left( \frac{1}{\widetilde{K}} \sum_{k=1}^{\widetilde{K}} E_{\widetilde{\balpha}_k} \right)^{1/2}.
\label{eqn:errestmean}
\end{eqnarray}
To form ${\cB}$ we choose $\widetilde{\balpha}_k \in \cA$ at random. %and ensure that $\widetilde{\cA} \cap \widehat{\cA} = \varnothing$, i.e., all testing parameter values $\widetilde{\balpha}_k$ are outside of the sampling set.

\begin{table}
\begin{center}
\begin{tabular}{c|c|c|c|c|c}
\hline
Experiment $\#$ & $1$ & $2$ & $3$ & $4$ & $5$ \\
\hline
$h$ & $0.33$ & $0.2$ & $0.1$ & $0.2$ & $0.2$ \\
\hline
$[K_1,K_2,K_3,K_4]$ & $[8,5,5,5]$ & $[8,5,5,5]$ & $[8,5,5,5]$ & $[10,7,7,7]$ & $[12,9,9,9]$ \\
\hline\\[-2.3ex]
$|\widetilde{\Omega}_D| / |\Omega_D|$ & $0.22$ & $0.22$ & $0.22$ & $0.094$ & $0.048$\\
\hline
$[q_1, q_2]$ & $[51, 18]$ & $[51, 18]$ & $[51, 18]$ & $[50, 18]$ & $[51, 18]$ \\
\hline
$[r_1^{\max}, r_2^{\max}, r_3^{\max}]$ & $[7, 8, 5]$ & $[7, 7, 5]$ & $[7, 8, 5]$ & $[8, 7, 7]$ & $[7, 7, 7]$ \\
\hline
$r_1^{\text{mean}}$ & $6.002$ & $6.003$ & $6.025$ & $6.008$ & $6.005$ \\
\hline
$r_2^{\text{mean}}$ & $5.012$ & $5.022$ & $5.085$ & $5.020$ & $5.015$ \\
\hline
$r_3^{\text{mean}}$ & $4.014$ & $4.026$ & $4.090$ & $4.046$ & $4.042$ \\
\hline
$\ell$ & $18$ & $18$ & $18$ & $18$ & $18$ \\
\hline
${E}_{\max}$ & $2.729 \cdot 10^{-3}$ & $2.764 \cdot 10^{-3}$ & $2.779 \cdot 10^{-3}$ & $1.256 \cdot 10^{-3}$ & $4.937 \cdot 10^{-4}$ \\
\hline
${E}_{\text{mean}}$ & $1.329 \cdot 10^{-3}$ & $1.331 \cdot 10^{-3}$ & $1.338 \cdot 10^{-3}$ & $4.512 \cdot 10^{-4}$ & $2.484 \cdot 10^{-4}$ \\
\hline
\end{tabular}
\end{center}
\caption{Results of numerical experiments for the heat equation.\label{tab:heat}}
\end{table}

We perform five numerical experiments, numbered $1-5$ with parameters and results reported in Table~\ref{tab:heat}. The following quantities were used in all experiments: $N = 100$ time-domain snapshots were computed; threshold value $\eps = \sqrt{2} \cdot 10^{-6}$ was set in Algorithm~\ref{Alg1a} so that the corresponding threshold in Algorithm~\ref{Alg1} takes the value of $\eps_C = 10^{-6}$. The rank-adaptive componenent-wise TT completion for $\bPhi^q_i$, $\bi\in\Omega_C^q$ was ran to ensure \eqref{eq:epsq} with $\varepsilon_q=10^{-6}$.  

For each experiment we report the
$C$-ranks $[q_1, q_2]$ from step 2(c) of Algorithm~\ref{Alg1} as well as the following quantities from the the last iteration of Algorithm~\ref{Alg1a}: the sampling rate $|\widetilde{\Omega}_D|/|\Omega_D|$, the maximum and mean $D$-ranks 
\begin{equation}
r_k^{\max} = \max\limits_{\bi \in \Omega_C^q} r_{k}^{\bi}, \quad
r_k^{\text{mean}} = \frac{1}{|\Omega_C^q|}\sum_{\bi \in \Omega_C^q} r_{k}^{\bi}, \quad
k = 1,2,\ldots,D-1,
\label{eqn:rmaxmean}
\end{equation}
from TT completions in step 3(b) of Algorithm~\ref{Alg1}.

In experiments 1-3 the sampling set $\widehat\cA$ was fixed with
$K_1=8$, $K_2 = 5$, $K_3 = 5$ and $K_4 = 5$ grid nodes in each parameter direction, respectively. At the same time, the spatial FEM discretization mesh was refined with the maximum element size decreasing from $h = 0.33$ to $h = 0.2$ and then to $h=0.1$. We observe in the corresponding columns of Table~\ref{tab:heat} that CTROM is robust with respect to the mesh refinement in all of the reported quantities. This includes the $C$-ranks $[q_1, q_2]$, the maximum and mean $D$-ranks \eqref{eqn:rmaxmean}, as well as both error quantities \eqref{eqn:errestmax}--\eqref{eqn:errestmean} that barely change during mesh refinement.

In experiments 2, 4, 5 the FEM mesh is kept the same with $h = 0.2$, but the 
set $\widehat\cA$ is refined from $8\times5^3 = 1000$ to $10\times7^3 = 3430$ and then to $12\times9^3 =8748$ grid nodes, respectively. The purpose of this set of experiments is to show that the sampling rate required for achieving the same level of accuracy decreases as the size of $\widehat\cA$ grows. Indeed, the sampling rate $|\widetilde{\Omega}_D| / |\Omega_D|$ first decreases from $0.22$ to $0.094$ and then to $0.048$. At the same time, the $C$-ranks $[q_1, q_2]$ and the maximum and mean $D$-ranks \eqref{eqn:rmaxmean} remain stable, while the error quantities \eqref{eqn:errestmax}--\eqref{eqn:errestmean} improve (decrease). This demonstrates that slice sampling tensor completion alleviates the curse of dimensionality in that the sampling rate can be decreased for larger $\widehat\cA$ without the loss of accuracy of the resulting CTROM.

 {
In the final experiment for the parameterized heat problem, we investigate }
\begin{wrapfigure}{r}{0.39\textwidth}
	\centering \vskip-4ex
	\includegraphics[width=0.38\textwidth]{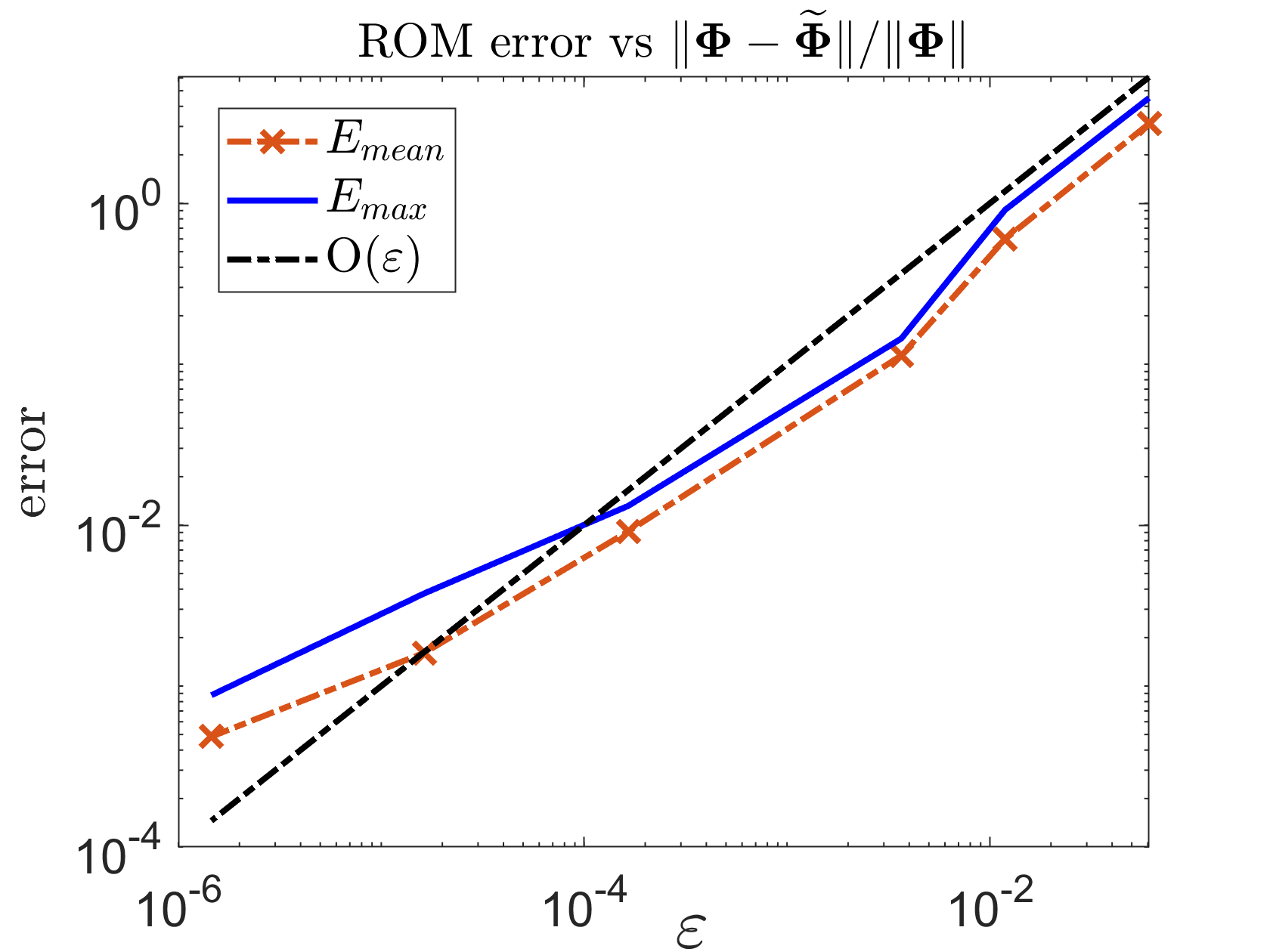}
	\caption{CTROM solution error versus completion accuracy.}
	\label{fig:Err}
\end{wrapfigure}
 {the dependence of the ROM solution error on the completion accuracy. The ROM solution error is defined in equations~\eqref{eqn:errestmax}--\eqref{eqn:errestmean}.
The completion error is characterized by $\varepsilon$ in \eqref{eq:estUpper}, and the estimate \eqref{eq:accuracy} provides an upper bound for it.
Since we do not have direct access to $\varepsilon$ or to certain quantities appearing in \eqref{eq:accuracy}, the completion accuracy is instead assessed on a test set, as described in \eqref{eq:estTest}.
The results presented in Fig.~\ref{fig:Err} show that the ROM errors decrease almost linearly over the given range of $\varepsilon$.
In this experiment, we set $h = 0.2$, used a $10 \times 7^3$ mesh in $\widehat{\mathcal{A}}$, and chose $\ell = q_2$.
}

% Obviously, the error \eqref{eqn:erra} and therefore \eqref{eqn:errestmax}--\eqref{eqn:errestmean}
% depend on the quantities on the right-hand side of the estimate \eqref{eqn:err1}: 
% $\Delta t$, $h$, $\eps$, $\delta$, $p$, and $\Lambda_{l+1}$. Here we fix $p=2$, $\Delta t = 0.2$ and $h = 0.1$
% and display in Figure~\ref{fig:err3hole} error estimates ${E}_{\max}$ and ${E}_{\text{mean}}$
% for $\eps$, $\delta$ and $\Lambda_{l+1}$ that vary in the following ranges:
% $\eps \in [10^{-6}, 10^{-1}]$, $\delta \in [0.02, 0.1]$ and 
% $\Lambda_{l+1} \in [2 \cdot 10^{-10}, 5 \cdot 10^{-5}]$. We observe in Figure~\ref{fig:err3hole} 
% that error estimates ${E}_{\max}$ and ${E}_{\text{mean}}$ behave exactly as predicted 
% by \eqref{eqn:err1}, i.e., their rate of decay with respect to $\eps$, $\delta$ and $\Lambda_{l+1}$
% is $\eps$, $\delta^2$ and $\Lambda_{l+1}^{1/2}$, respectively. As the exact solution is not explicitly known, conducting a convergence study in terms of $h$ and $\Delta t$ is not feasible. We observed that varying $h$ while keeping other parameters fixed has minimal impact on ${E}_{\max}$ and ${E}_{\rm mean}$ --- that is, on the error between the ROM and FOM solutions. 
 
%=====================================================================
\subsection{Parameterized advection-diffusion equation}
The second numerical example involves more (up to $D_{\max}=12$) parameters than that in Section~\ref{sec:heat}
and corresponds to the dynamical system resulting from the discretization of a linear advection-diffusion equation
\begin{equation}
u_t(\bx, t, \balpha) = \nu \Delta u(\bx, t, \balpha) - \bseta (\bx, \balpha) \cdot \nabla u(\bx, t, \balpha) + f(\bx),
 \quad \bx \in \Omega, \quad t \in (0, T)
\label{eqn:advdiff}
\end{equation}
in the unit square domain $\Omega = [0,1] \times [0,1] \subset \R^2$, $\bx = (x_1, x_2)^T \in \Omega$,
with final time $T=1$.
Here $\nu = 1/30$ is the diffusion coefficient, $\bseta: \Omega \times \cA \to \R^2$ is the parameterized  advection field 
and $f(\bx)$ is a Gaussian source
\begin{equation}
f(\bx) = \frac{1}{2 \pi \sigma_s^2} \exp \left( - \cfrac{ (x_1 - x_1^s)^2 + (x_2 - x_2^s)^2}{2 \sigma_s^2} \right),
\label{eqn:advdiffsrc}
\end{equation}
with $\sigma_s = 0.05$, $x_1^s = {x_2^s} = 0.25$. 
Homogeneous Neumann boundary conditions and zero initial condition are imposed
\begin{equation}
\left. \left( \bn \cdot \nabla u \right) \right|_{\partial \Omega} = 0, \quad u(\bx, 0, \balpha) = 0.
\label{eqn:advdiffbcic}
\end{equation}
The model parameters enter the divergence free advection field $\bseta$ defined as
\begin{equation}
\bseta(\bx, \balpha) = \begin{pmatrix} \eta_1(\bx, \balpha) \\ \eta_2(\bx, \balpha) \end{pmatrix}  = 
\begin{pmatrix} \cos(\alpha_1) \\ \sin(\alpha_1) \end{pmatrix} 
+ \frac{1}{\pi} \begin{pmatrix} \partial_{x_2} h(\bx, \balpha) \\ - \partial_{x_1} h(\bx, \balpha) \end{pmatrix},
\label{eqn:etacos}
\end{equation}
where $h(\bx)$ is the cosine trigonometric polynomial
\begin{equation}
\begin{split}
h(\bx, \balpha) = & \;\;\;\;
\alpha_2 \cos(\pi x_1) + 
\alpha_3 \cos(\pi x_2) + 
\alpha_4 \cos(\pi x_1) \cos(\pi x_2) \\
& + \alpha_5 \cos(2\pi x_1) + 
\alpha_6 \cos(2\pi x_2) \\
& + \alpha_7 \cos(2\pi x_1) \cos(\pi x_2) +
\alpha_8 \cos(\pi x_1) \cos(2\pi x_2) \\
& + \alpha_9 \cos(2\pi x_1) \cos(2\pi x_2)  + \alpha_{10} \cos(3\pi x_1) + \alpha_{11} \cos(3\pi x_2) \\
& + \alpha_{12} \cos(3 \pi x_1) \cos(\pi x_2)
%+ \alpha_{13} \cos(\pi x_1) \cos(3\pi x_2) + \\
%& + \alpha_{14} \cos(3\pi x_1) \cos(2\pi x_2) +\alpha_{15} \cos(2 \pi x_1) \cos(3\pi x_2) +\alpha_{16} \cos(3\pi x_1) \cos(3\pi x_2)
.
\end{split}
\end{equation}
The system \eqref{eqn:advdiff}--\eqref{eqn:advdiffbcic} is discretized similarly to \eqref{eqn:heat}--\eqref{eqn:bcins}, but using a uniform grid in $\Omega$.

We perform the numerical experiments for the advection-diffusion system for varying number of parameters $D$ by setting $\alpha_j = 0$ for $j=D+1,\ldots,12$. Then, the vectors $[\alpha_1,\ldots,\alpha_D]^T$ belong to the parameter domain that is the box $\mathcal{A} = [0.1\pi, 0.3\pi] \times [-0.1, 0.1]^{D-1}$.

\begin{table}
\begin{center}
\begin{tabular}{c|c|c|c}
\hline
$D$ & $6$ & $9$ & $12$ \\
\hline\\[-2.3ex]
$h$ & $\frac1{32}$ & $\frac1{32}$ & $\frac1{32}$ \\[0.5ex]
\hline
$K_1$ & $10$ & $10$ & $10$ \\
\hline
$K_j$, $j=2,\ldots,D$ & $5$ & $5$ & $5$ \\
\hline\\[-2.3ex]
$|\widetilde{\Omega}_D| / |\Omega_D|$ & $0.05056$ & $0.00204$ & $6\times10^{-5}$ \\
\hline
$[q_1, q_2]$ & $[55, 11]$ & $[66, 11]$ & $[83, 11]$  \\
\hline
$\ell$ & $11$ & $11$ & $11$ \\
\hline
${E}_{\max}$ & $1.890 \cdot 10^{-4}$ & $2.022 \cdot 10^{-4}$ & -- \\
\hline
${E}_{\text{mean}}$ & $1.242 \cdot 10^{-4}$ & $1.334 \cdot 10^{-4}$ & -- \\
\hline
\end{tabular}
\end{center}
\caption{Results of numerical experiments for the advection-diffusion equation with $D=6,9,12$. \label{tab:advdiffD6a}}
\end{table}

We set $\eps = 5 \cdot 10^{-4}$ in  Algorithm~\ref{Alg1a}  and $\eps_C = 10^{-6}$ in Algorithm~\ref{Alg1}. The rank-adaptive componenent-wise TT completion for $\bPhi^q_i$, $\bi\in\Omega_C^q$ was performed to ensure \eqref{eq:epsq} with $\varepsilon_q=10^{-4}$. 
The results of numerical experiments with parametrized advection--diffusion equation are summarized in Tables~\ref{tab:advdiffD6a}--\ref{tab:advdiffD6b} and Figures~\ref{fig:Dall}--\ref{fig:D6b}.

\begin{table}
\begin{center}
\begin{tabular}{c|c|c|c|c|c|c}
\hline
Experiment $\#$ & $1$ & $2$ & $3$ & $4$ & $5$ & $6$\\
\hline\\[-2.3ex]
$h$ & $\frac1{32}$ & $\frac1{32}$ & $\frac1{32}$ & $\frac1{32}$ & $\frac1{64}$ & $\frac1{16}$ \\[0.5ex]
\hline
$K_1$ & $10$ & $19$ & $29$ & $39$ & $10$ & $10$ \\
\hline
$K_j$,  $j=2,\ldots,6$ & $5$ & $9$ & $14$ & $19$ & $5$ & $5$ \\
\hline\\[-2.3ex]
$|\widetilde{\Omega}_D| / |\Omega_D|$ & $0.05056$ & $0.00262$ & $0.00038$ & $0.00008$ & $0.05056$ & $0.05056$\\
\hline
$[q_1, q_2]$ & $[55, 11]$ & $[54, 11]$ & $[53, 11]$ & $[53, 11]$ & $[55, 11]$ & $[56, 11]$ \\
\hline
$r_1^{\text{mean}}$ & $8.500$ & $7.173$ & $7.759$ & $7.593$ & $8.540$ & $8.553$ \\
\hline
$r_2^{\text{mean}}$ & $12.800$ & $10.227$ & $11.855$ & $11.236$ & $12.727$ & $12.823$ \\
\hline
$r_3^{\text{mean}}$ & $15.014$ & $12.144$ & $13.065$ & $13.264$ & $15.011$ & $14.995$ \\
\hline
$r_4^{\text{mean}}$ & $15.805$ & $18.075$ & $19.807$ & $20.001$ & $15.809$ & $15.797$ \\
\hline
$r_5^{\text{mean}}$ & $5.000$ & $9.000$ & $12.380$ & $13.121$ & $5.000$ & $5.000$ \\
\hline
$\ell$ & $11$ & $11$ & $11$ & $11$ & $11$ & $11$ \\
\hline
${E}_{\max}$ & $1.890 \cdot $ & $2.364 \cdot $ & $1.644 \cdot $ & $2.274 \cdot $ & $2.277 \cdot $ & $1.653 \cdot $ \\
& $\cdot 10^{-4}$ & $\cdot 10^{-4}$ & $\cdot 10^{-4}$ & $\cdot 10^{-4}$ & $\cdot 10^{-4}$ & $\cdot 10^{-4}$ \\
\hline
${E}_{\text{mean}}$ & $1.242 \cdot $ & $1.025 \cdot $ & $9.080 \cdot $ & $9.533 \cdot $ & $1.493 \cdot $ & $1.093 \cdot $ \\
& $\cdot 10^{-4}$ & $\cdot 10^{-4}$ & $\cdot 10^{-5}$ & $\cdot 10^{-5}$ & $\cdot 10^{-4}$ & $\cdot 10^{-4}$ \\
\hline
\end{tabular}
\end{center}
\caption{Results of numerical experiments for the advection-diffusion equation with $D=6$. \label{tab:advdiffD6b}}
\end{table}

In the first series of experiments, we vary the parameter space dimension $D$, while keeping the discretization parameters fixed:    $h=1/32$, $\Delta t=\frac1{200}$. The grid in the parameter domain is the Cartesian product of uniform grids for each parameter. We use 10 uniformly distributed nodes for $\alpha_1$ and 5 nodes for $\alpha_i$, $i>1$. The recovered $C$-ranks are given in Table~\ref{tab:advdiffD6a} and the mean $D$-ranks were found to be 
[1, 8.50, 12.8, 15.0,15.8, 5.0, 1] for $D=6$, 
[1, 8.27, 14.2, 18.0, 20.8, 27.0, 29.7, 20.1, 5.0, 1] for $D=9$ and 
[1, 7.54, 12.6, 16.7, 20.8, 26.9, 30.6, 38.6, 48.9, 50.9, 25.0, 5.0, 1] for $D=12$. 
We see that both ranks increase with $D$, reflecting the growing variability of the solution for more complex advection fields as more parameters are involved. For $D = 12$, however, we were only able to achieve a completion accuracy of $\epsilon = 10^{-3}$, which required fitting approximately $27\cdot10^3$ parameters from the training set. The adaptive completion algorithm we used was not efficient for a larger number of training parameters in this example.

\begin{figure}[ht]
\begin{center}
\includegraphics[width=0.32\textwidth]{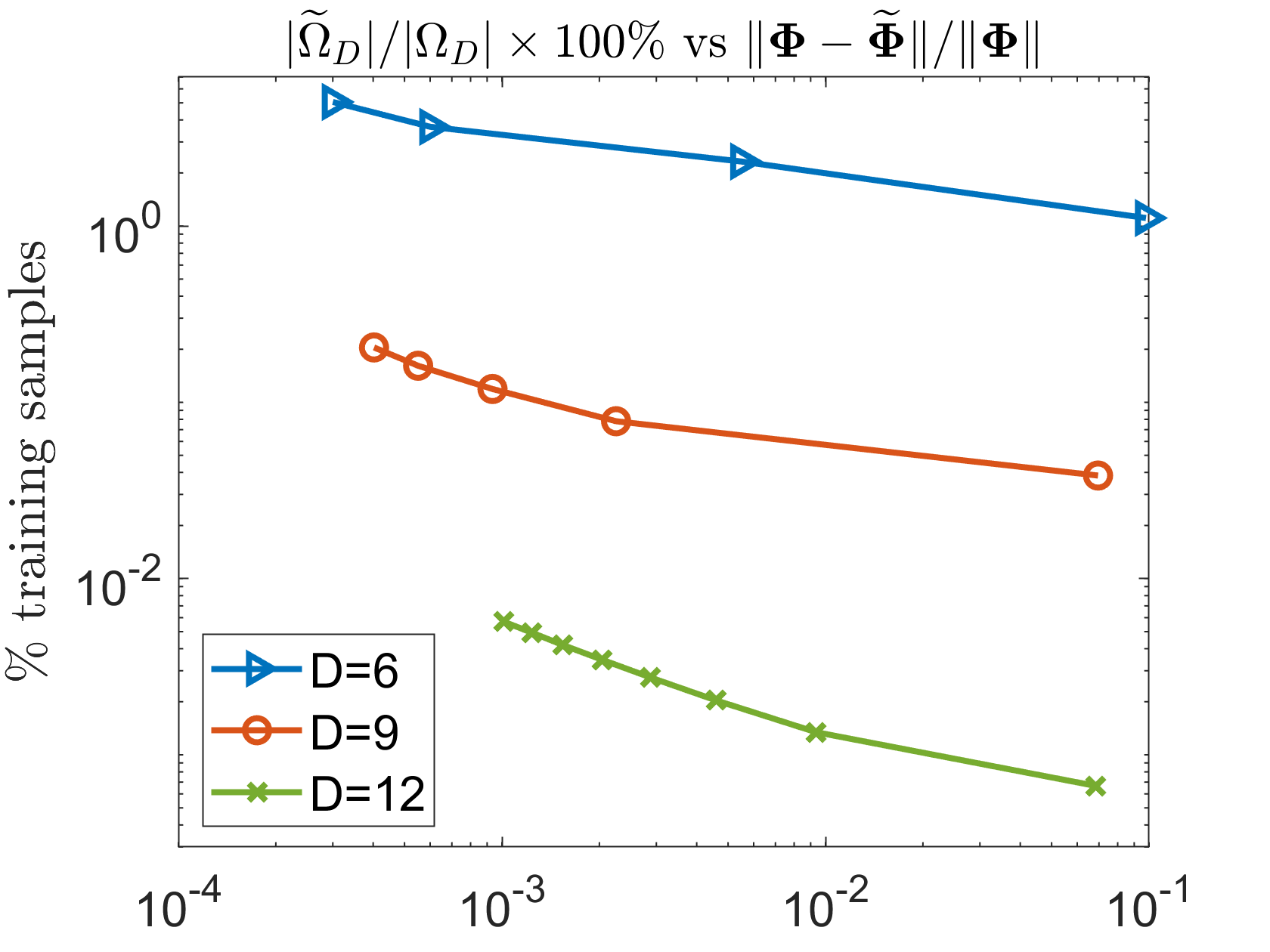} 
\includegraphics[width=0.32\textwidth]{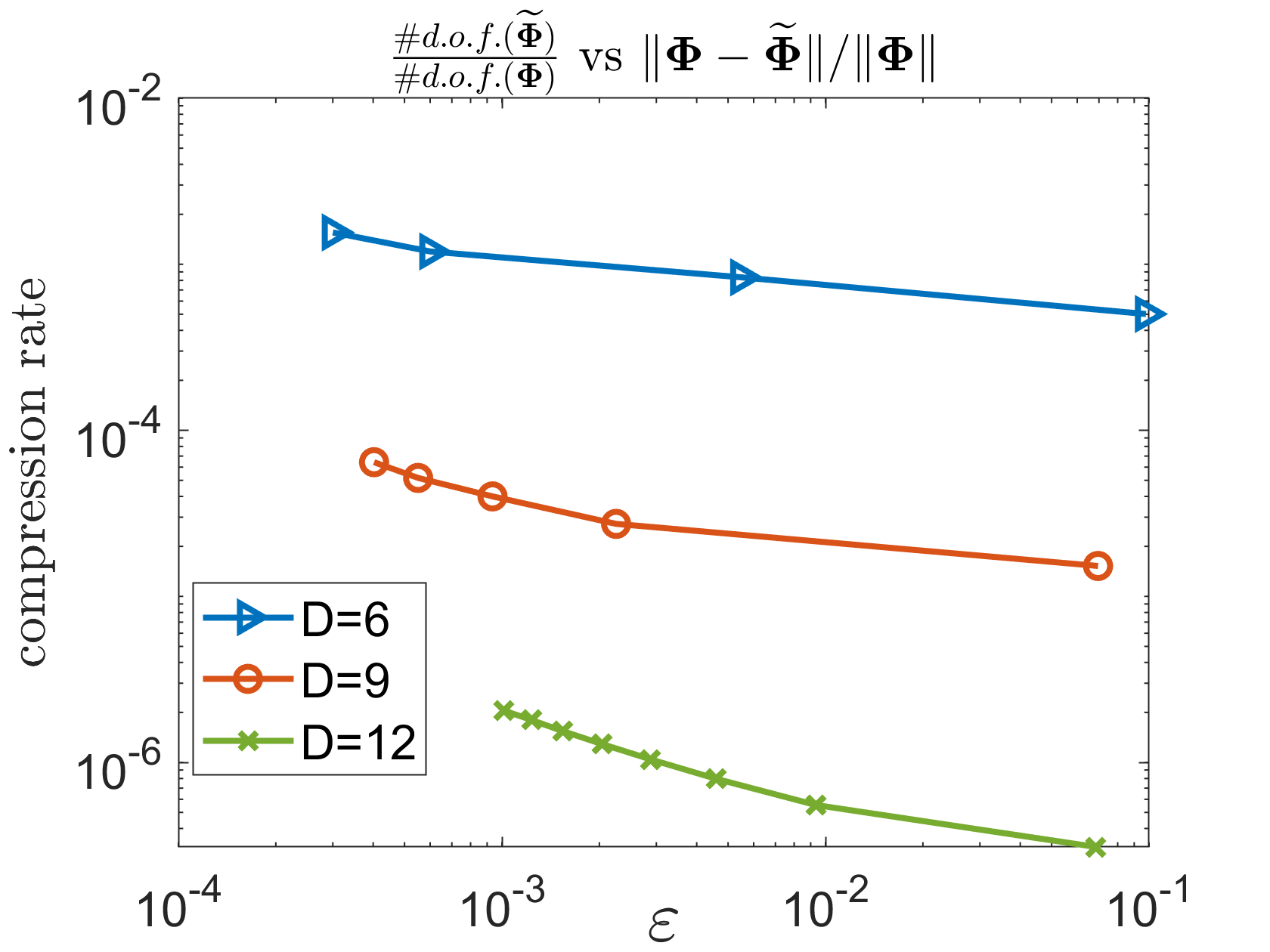} 
\includegraphics[width=0.32\textwidth]{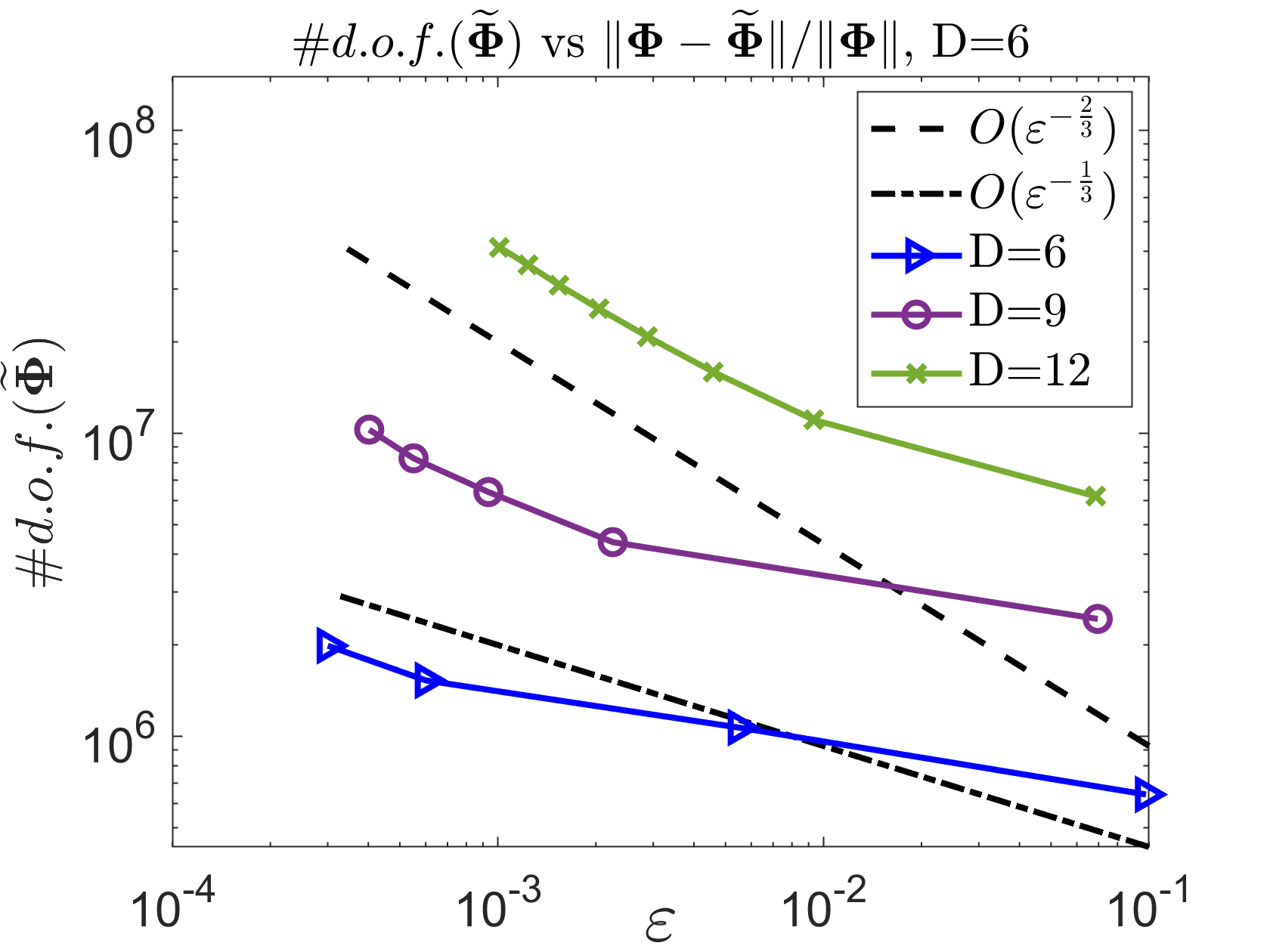} 
\end{center}
\caption{Completion statistics versus completion accuracy for parameter space dimensions $D=6,9,12$.
Left panel: Percent of the observed tensor elements; Central panel: Compression factor; Right panel: Total number of degrees of freedom for representation in HTT format.  }
\label{fig:Dall}
\end{figure}

\begin{figure}[ht]
\begin{center}
\includegraphics[width=0.32\textwidth]{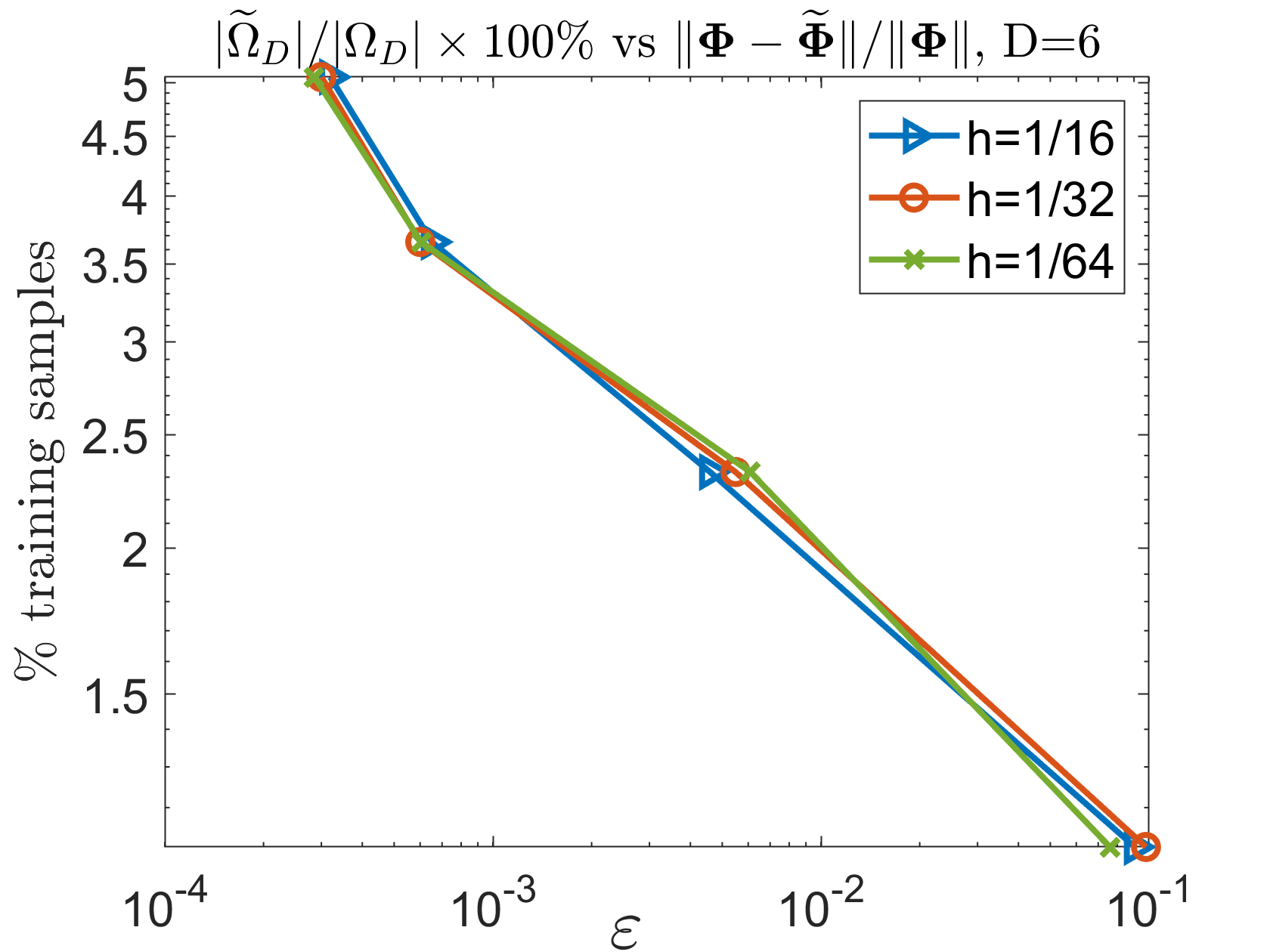} 
\includegraphics[width=0.33\textwidth]{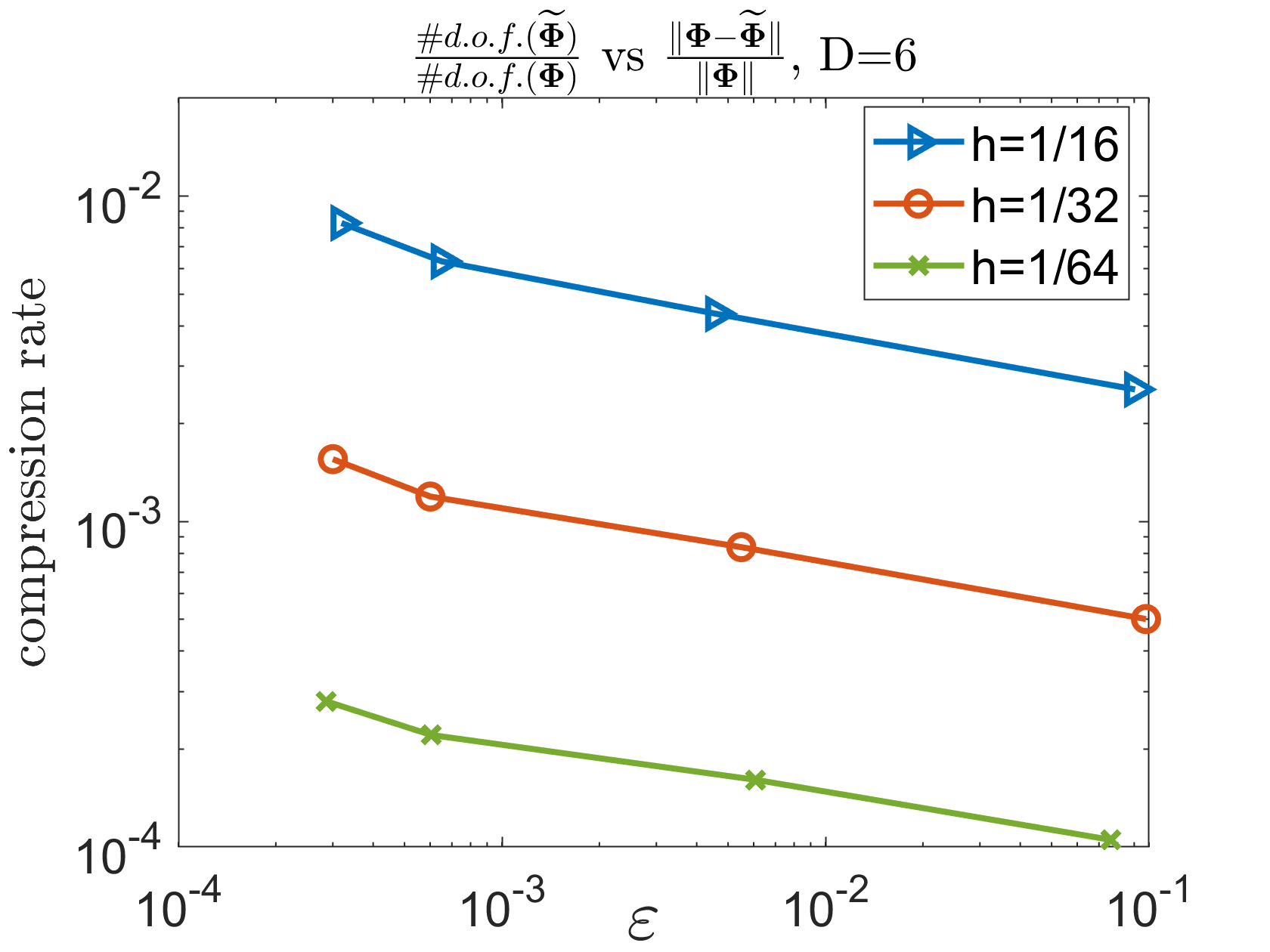} 
\includegraphics[width=0.32\textwidth]{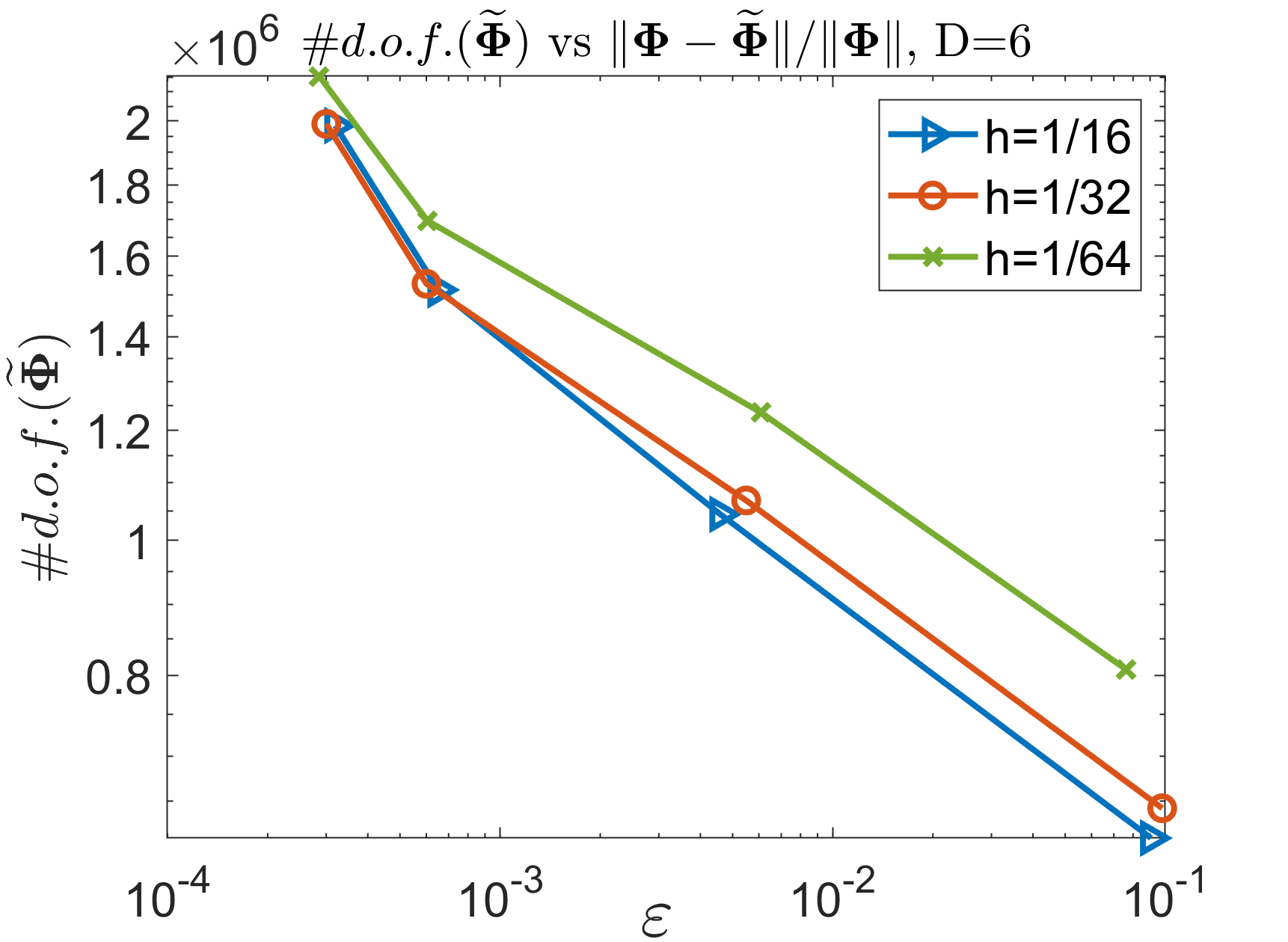} 
\end{center}
\caption{Completion statistics versus completion accuracy for various spatial resolutions.
Left panel: Percent of the observed tensor elements; Central panel: Compression factor; Right panel: Total number of degrees of freedom for representation in HTT format.}
\label{fig:D6}
\end{figure}

\begin{figure}[ht]
\begin{center}
\includegraphics[width=0.30\textwidth]{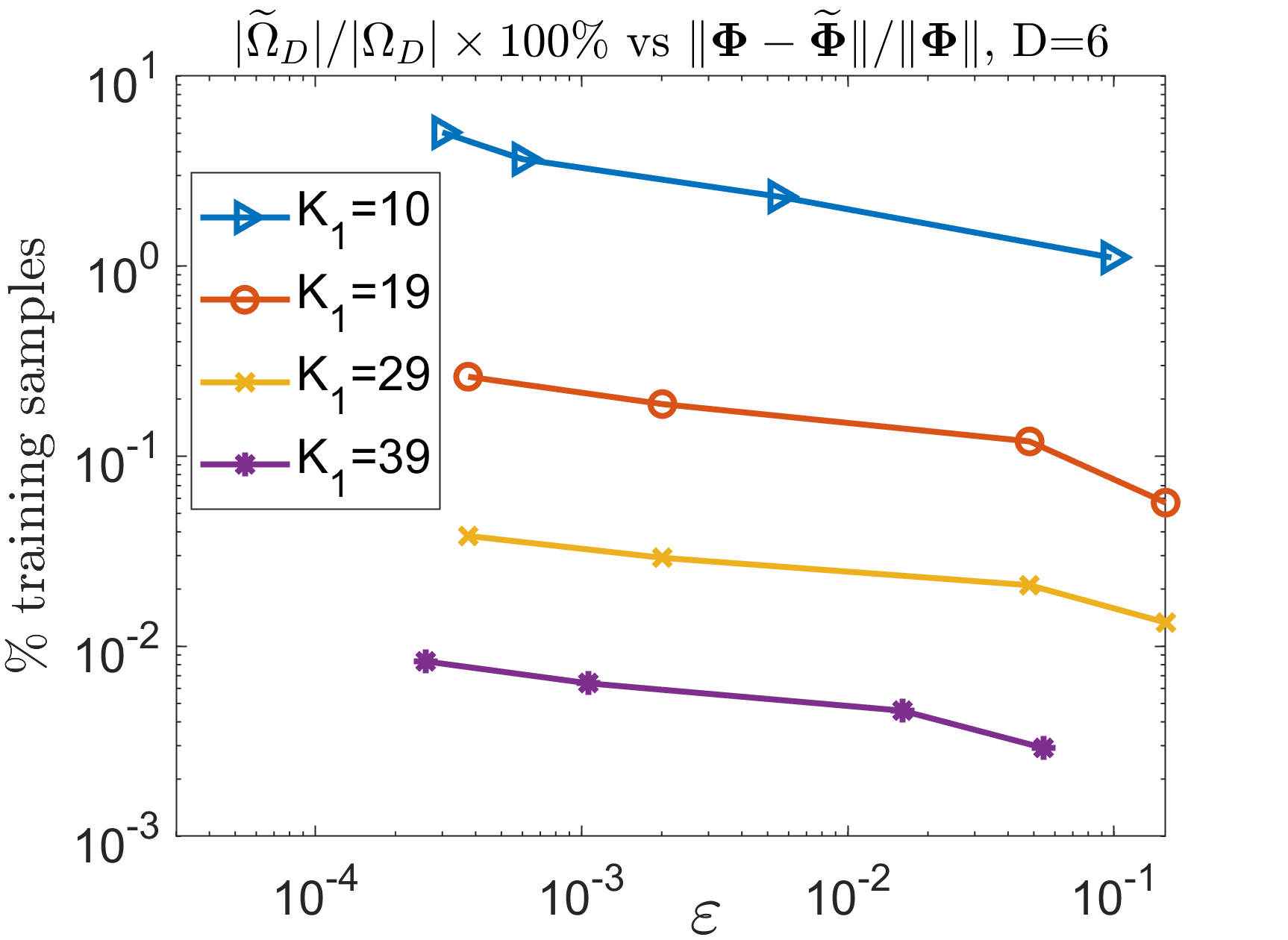} 
\includegraphics[width=0.34\textwidth]{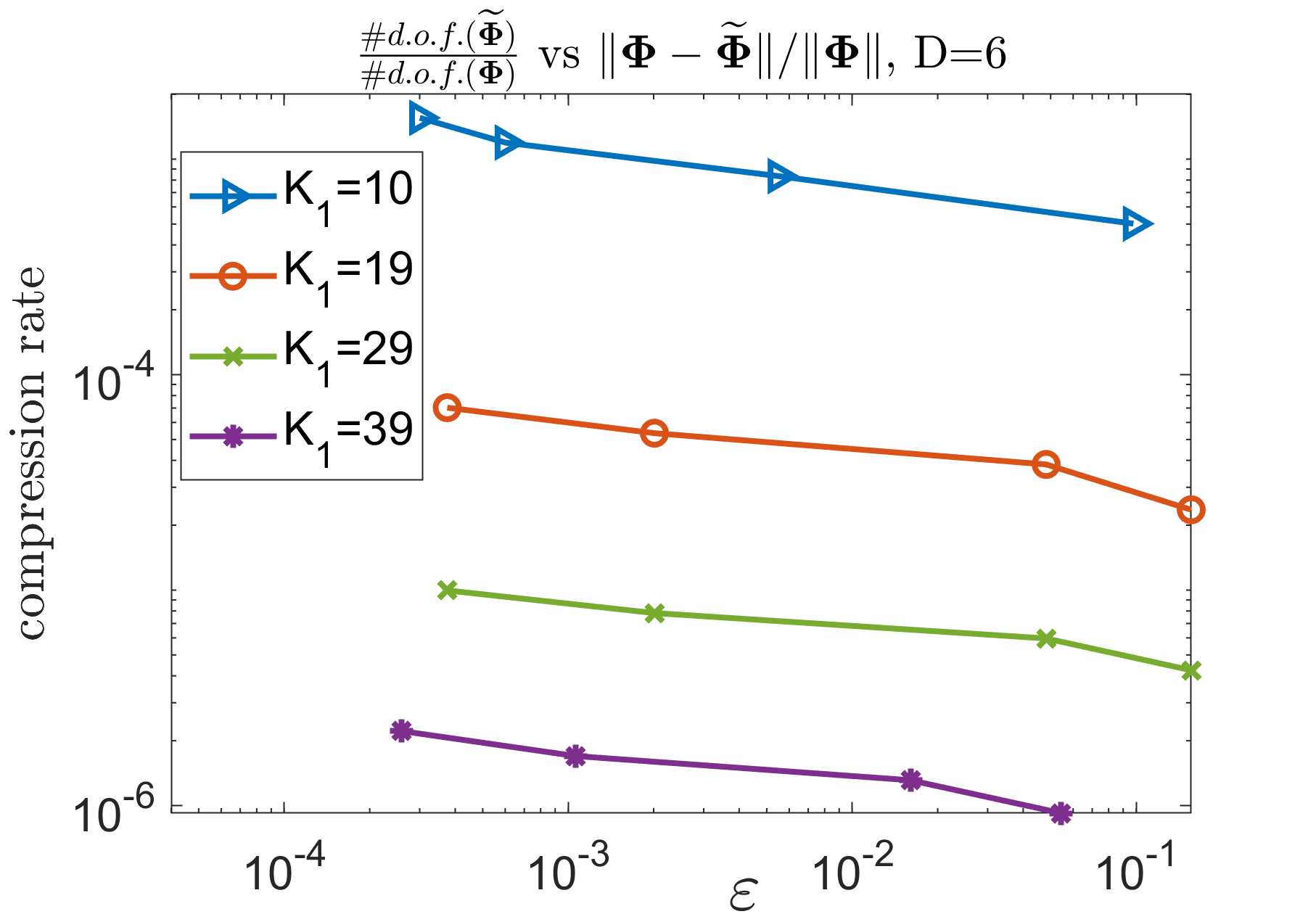} 
\includegraphics[width=0.34\textwidth]{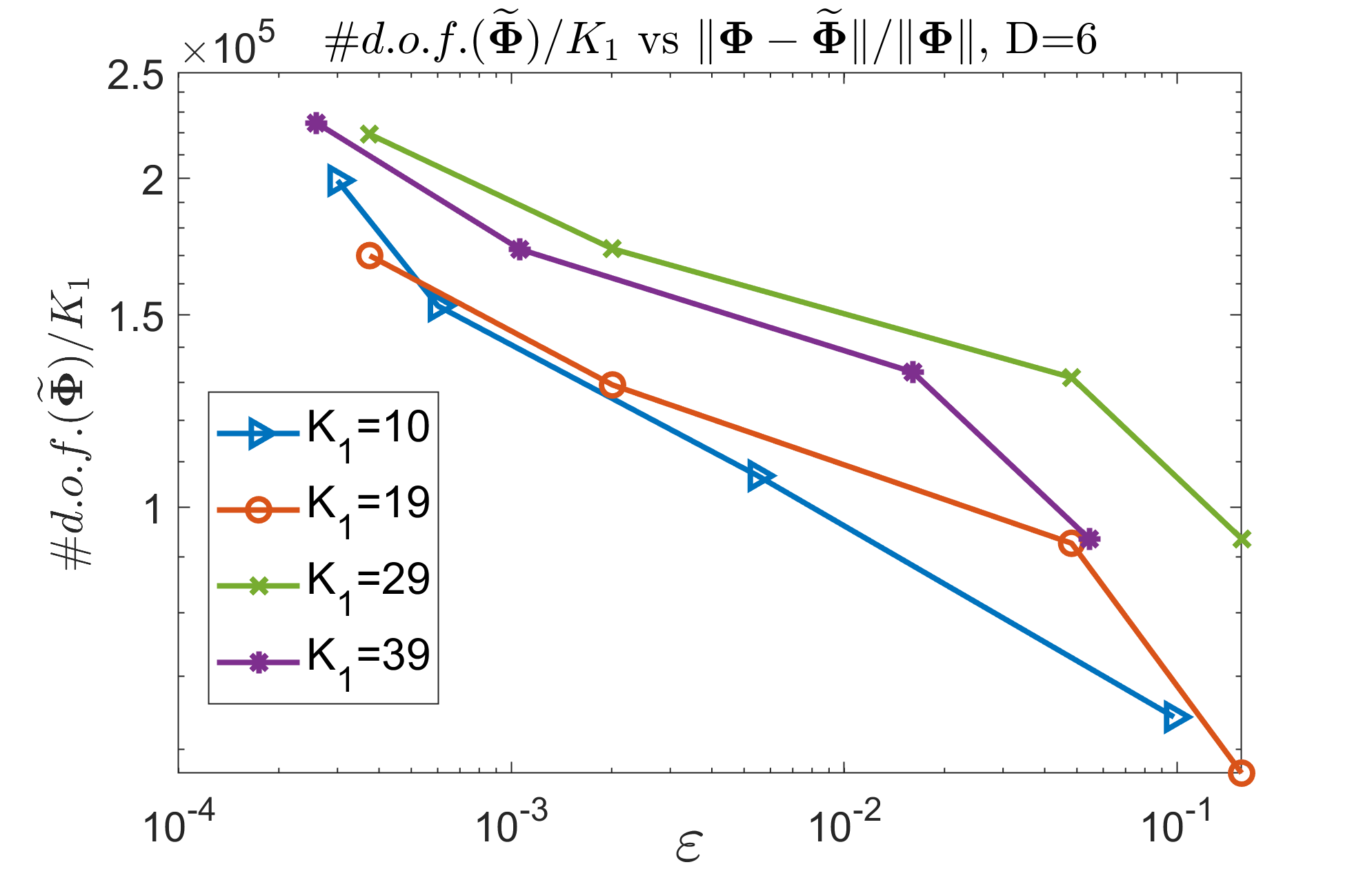} 
\end{center}
\caption{Completion statistics versus completion accuracy for various resolutions of the parameter domain.
Left panel: Percent of the observed tensor elements; Central panel: Compression factor as a ratio in HTT format; Right panel: A rescaled total number of degrees of freedom for representation in HTT format.}
\label{fig:D6b}
\end{figure}

The percentage of observed entries of the snapshot tensor $\boldsymbol{\Phi}$ required for its completion depends on $D$ and the targeted accuracy $\varepsilon$; see the left panel in Figure~\ref{fig:Dall}. It decreases from $5\%$ for $D = 6$ to $0.006\%$ for $D = 12$ as required for completion with $\varepsilon = 5 \times 10^{-4}$ accuracy; see the ratio $|\widetilde{\Omega}_D| / |\Omega_D|$ in Table~\ref{tab:advdiffD6a}.

Other completion statistics presented in Figure~\ref{fig:Dall} include the compression factor and the total degrees of freedom (d.o.f.) required for representation in the HTT format. The compression factor, defined as the ratio between the number of d.o.f. in HTT and full tensor format, and the total number of d.o.f., grows as $\varepsilon$ decreases, as expected, reaching around $10^6$ times compression for $D = 12$. For the total d.o.f. in HTT format, the observed rate of growth with $\varepsilon \to 0$ lies between $O(\varepsilon^{-\frac{1}{3}})$ and $O(\varepsilon^{-\frac{2}{3}})$; see Figure~\ref{fig:Dall}, right panel. 

We now study how the completion statistics change when the discretization parameters vary. 
Specifically, we vary the space discretization parameter $h$ and refine the mesh in the parameter domain, keeping the number of parameters fixed ($D = 6$). 
Table~\ref{tab:advdiffD6b} shows that both $C$ and $D$ ranks are not sensitive to the refinement or coarsening of meshes in both physical and parameter domains.  

The percentage of observed entries of the snapshot tensor $\boldsymbol{\Phi}$ required for its completion is essentially independent of the space resolution (left panel in Figure~\ref{fig:D6}) and decreases steeply as the mesh in the parameter domain is refined (left panel in Figure~\ref{fig:D6b}). The dependence of the compression factors and the total d.o.f. in HTT format on $\varepsilon$ remains consistent across refined meshes in both parameter and physical domains. 
In particular, the dependence of $\#\text{d.o.f.}(\widetilde{\boldsymbol{\Phi}})$ on $h$ is very mild and results only from the growth of ``external'' dimensions $M_i$ of the $\mathbf{U}$-matrices in \eqref{eq:HTT} (right panel in Figure~\ref{fig:D6}).
A similar observation holds for the dependence of $\#\text{d.o.f.}(\widetilde{\boldsymbol{\Phi}})$ on parameter mesh refinement (right panel in Figure~\ref{fig:D6b}).
Both are a consequence of the fact that $C$ and $D$ ranks are (almost) independent of these mesh refinements (see Table~\ref{tab:advdiffD6b}). 

 {The final experiment for the advection-diffusion equation is to compare the performance of CTROM to the conventional TROM that uses the full tensor of snapshots, as introduced in \cite{mamonov2022interpolatory}. For this comparison the setup is as follows. We use $D=9$ parameters and the grid $\hcA$ with $K_1 = 5$ and $K_j = 3$, $j=2,\ldots,D$, for a total of $32,805$ grid points. While $\hcA$ is rather coarse, it is still the finest grid that can be handled by the conventional TROM for $D=9$ on the machine used for the numerical experiments. We set $\eps = 5 \cdot 10^{-4}$ in Algorithm~\ref{Alg1a} and we use the same accuracy for TT decomposition in the offline stage of the conventional TROM. We compare the two TROM aproaches by timing the offline stage of both, since it dominates the computational cost. For the conventional TROM the computation of the snapshots for the tensor $\bPhi$ took $28$hrs $51$min with additional $14$min to perform its TT decomposition for a total of $29$hrs $05$min for the offline stage. The offline stage of CTROM, Algorithm~\ref{Alg1a}, took $7$hrs $10$min which includes computing $4,172$ samples, i.e., the percentage of observed entries of $\bPhi$ is $12.71\%$. Thus, for this particular case we observe more than four-fold gain in computational cost of CTROM compared to the conventional TROM. Note also that the relatively high observed entry percentage in Algorithm~\ref{Alg1a} is due to a coarse grid $\hcA$. One should expect even greater performance gains of CTROM for finer grids $\hcA$.}

We conclude this section by noting that the accuracy of the HTT-ROM is found to be insensitive to variations in dimensions and meshes (see the error statistics in Tables~\ref{tab:advdiffD6a} and~\ref{tab:advdiffD6b}). It is determined by the accuracy of the tensor completion and $\ell$.
We consistently use $\ell = 11$, which is sufficiently large for the HTT-ROM error to be dominated by the completion error.

%=====================================================================
\section{Conclusions}

Motivated by the problem of reconstructing the parametric solution manifold of a dynamical system through sparse sampling, this paper introduces a low-rank tensor format for the inexact completion of a tensor observed slice-wise. The completion procedure is based on solving a sequence of ``standard'' tensor completion problems using a common format, with the tensor train format selected in this work. These standard completion problems can be addressed in parallel.

Applying the completion method to two linear parametric parabolic PDEs, discretized via the finite element method, revealed the following properties: (i) The resulting tensor ranks are insensitive to discretization parameters in space and time, as well as to grid resolution in the parameter domain. (ii) The ranks increase with more accurate completion, requiring additional memory for storing the recovered tensor. This dependency is of the form 
$
\#\text{d.o.f.} = O(\eps^{-\alpha})
$,
where $\eps$ represents the completion accuracy and $\alpha \ge 0$ is an exponent. In our tests, $\alpha$ did not exceed $\frac{2}{3}$. (iii) The total number and percentage of observed tensor entries required for successful completion depends on the target accuracy, but more significantly on the dimension of the parameter space and on how fine is the parameter domain grid. Specifically, the total number of entries required increases roughly linearly with the number of nodes for any parameter. Therefore, if the grid in $\mathcal{A}$ is refined by doubling the nodes in each direction, the number of observed entries required doubles, while the percentage of observed entries `decrease' by a factor of $2^{D-1}$. This explains our ability to recover snapshot tensors from fewer than $0.01\%$ of entries in higher-dimensional parameter spaces. (iv) The compression achieved by the HTT format for snapshot tensors similarly depends on parameter space dimension and resolution. In our numerical examples, we achieved a compression factor of $10^6$ while preserving an accuracy of $10^{-4}$.

In the context of parametric dynamical systems, we found the available rank-adaptive tensor completion method effective and efficient for problems with up to a dozen parameters. Extending this approach significantly beyond this number of parameters warrants further investigation.

%===============================================================================
\begin{acknowledgements}
 A.M. and M.O. were supported by the U.S. National Science Foundation under award DMS-2309197.
This material is based upon research supported in part by the U.S. Office of Naval Research 
under award number N00014-21-1-2370 to A.M. The authors are grateful to the anonymous reviewers for their insightful comments and suggestions, which led to several improvements in the manuscript.
\end{acknowledgements}

% BibTeX users please use one of
%\bibliographystyle{spbasic}      % basic style, author-year citations
\bibliographystyle{spmpsci}      % mathematics and physical sciences
%\bibliographystyle{spphys}       % APS-like style for physics
%\bibliography{}   % name your BibTeX data base
\bibliography{literatur}{}

\end{document}